\newtheorem{Thm}{Theorem}[section]
\newtheorem{theorem}[Thm]{Theorem}
\newtheorem*{theorem*}{Theorem}
\newtheorem{proposition}[Thm]{Proposition}
\newtheorem{remark}[Thm]{Remark}
\newtheorem{example}[Thm]{Example}
\renewcommand{\phi}{\varphi}
\newcommand{\C}{\mathbb{C}}
\newcommand{\co}{\mathbb{C}}
\newcommand{\D}{\mathbb{D}}
\newcommand{\Dh}{\widehat{\mathbb{D}}}
\newcommand{\T}{\mathbb{T}}
\title{New classes of hypercyclic Toeplitz operators}
\author[E. Abakumov, A. Baranov, S. Charpentier, and A. Lishanskii]{Evgeny Abakumov, Anton Baranov,
St\'ephane Charpentier,\\ and Andrei Lishanskii }
\address{Evgeny Abakumov\\
	 LAMA, Univ Gustave Eiffel, Univ Paris Est Creteil, CNRS, F-77454 Marne-la-Vall\'ee, France}
\email{evgueni.abakoumov@univ-eiffel.fr}
\address{Anton Baranov\\
	Department of Mathematics and Mechanics, St. Petersburg State University, 28, Universitetskii prosp., St. Petersburg, 1984504, Russia}
\email{anton.d.baranov@gmail.com}
\address{St\'ephane Charpentier\\
	Aix-Marseille Universit\'e,
	Centre de Math\'ematiques et Informatique,
	39, rue F. Joliot Curie, 13453 Marseille Cedex 13, France}
\email{stephane.charpentier.1@univ-amu.fr}
\address{Andrei Lishanskii\\
	Department of Mathematics and Computer Science, St. Petersburg State University, 
29B, 14 line of the Vasilievsky Island, St. Petersburg,  199178, Russia}
\email{lishanskiyaa@gmail.com}
\thanks{The results of Sections 2 and 3 were obtained with the support of Ministry of Science and Higher Education 
of the Russian Federation, agreement No 075-15-2019-1619.  
The results of Section 4 were obtained with the support of Russian Science Foundation grant  19-11-00058.}
\keywords{hypercyclic operator, Toeplitz operator, univalent function}
\subjclass{47A16, 47B35, 30H10}
\begin{document}

\begin{abstract}
We study hypercyclicity of  Toeplitz operators in the Hardy space 
$H^2(\mathbb{D})$ 
with symbols of the form $R(\overline{z}) +\phi(z)$, where $R$ is a rational 
function and $\phi \in H^\infty(\mathbb{D})$. 
We relate this problem to cyclicity of certain families of functions
for analytic Toeplitz operators
and give new sufficient conditions for hypercyclicity
based on deep results of B. Solomyak. 
\end{abstract}

\maketitle
\sloppy

\section{Introduction and main results}

Toeplitz operators are among the most important objects in operator theory.
They have numerous applications in complex analysis, theory of orthogonal polynomials, 
mathematical physics, etc.
Recall that for a function $\psi \in L^\infty(\mathbb{T})$ the {\it Toeplitz operator}
$T_\psi: H^2 \to H^2$ with the symbol $\psi$  is defined as 
$T_\psi f = P_+(\psi f)$, where $P_+$ stands for the orthogonal projection from
$L^2(\mathbb{T})$ onto  $H^2$.
As usual, $\mathbb{D}$ and $\mathbb{T}$ denote the unit disc and 
the unit circle, respectively. We recall that the Hardy space $H^2$ of the unit disc $\D$
consists of functions analytic in $\D$  whose Taylor coefficients are square summable,
and   $H^\infty$ is the space of all bounded analytic functions in $\D$.

In the present paper we study the hypercyclicity property for a class of Toeplitz 
operators on $H^2$. A continuous linear operator $T$ on a separable Banach space 
$X$  is said to be  \textit{hypercyclic} if 
there exists $x \in X$ such that the set 
$\{T^n x:\, n\in\mathbb{N}_0\}$ is dense in $X$
(here $\mathbb{N}_0 = \{0,1,2, \dots\}$). 
Toeplitz operators with antianalytic symbols were among the
basic examples of hypercyclic operators. In 1969, 
S. Rolewicz \cite{rol} showed that the operator $T_{\alpha\overline{z}}$ 
(a multiple of the backward shift) is hypercyclic on $H^2$ 
whenever $|\alpha|>1$. Later, G. Godefroy and J. Shapiro \cite{gosh} 
showed that for a function 
$\phi \in H^\infty$ the antianalytic Toeplitz operator $T_{\overline{\phi}}$
is hypercyclic if and only if $\phi(\mathbb{D}) \cap \mathbb{T} \ne \emptyset$.
On the other hand, it is obvious that there are no 
hypercyclic Toeplitz operators with analytic symbols
(i.e., among multiplication operators). 

However, the problem of describing hypercyclic Toeplitz operators 
in the general case seems to be largely open. 
This problem was explicitly stated by S. Shkarin \cite{sh} who described
hypercyclic Toeplitz operators with symbols of the form $\Phi(z) =a\overline{z} +b +cz$
(i.e., with tridiagonal matrix). A. Baranov and A. Lishanskii 
\cite{barl} gave necessary and (separately) sufficient conditions for hypercyclicity 
of Toeplitz operators with polynomial antianalytic part of the symbol,
that is, for symbols $\Phi$ of the form $\Phi(z) = P\big(\frac{1}{z}\big) +\phi(z)$,
where $P$ is a polynomial and $\phi\in H^\infty$.
It turned out that the valence of the symbol played the central role in this study.
\medskip{}

In the present paper we will consider more general symbols. Let $R$
be a rational function without poles in $\overline{\mathbb{D}}$, i.e., 
\begin{equation}
\label{r}
R(z) = P(z) +\sum _{l=1}^r \sum_{j=1}^{k_l}\frac{\alpha_{l, j}}{(z-\eta_l)^{j}},
\end{equation}
where $P(z)=\sum_{k=0}^{N_1} c_k z^k$  is a polynomial of degree $N_1$, 
$\alpha_{l, j} \in \mathbb{C}$, 
and $\eta_l \in \widehat{\mathbb{D}} := \C \setminus \overline{\D}$, $1\leq l\leq r$, 
are the (distinct) poles of $R$ of multiplicities $k_l$. 
We put $N_2 =\sum_{l=1}^r  k_l$ and denote by $N=N_1+N_2$ the degree of the rational function $R$.
We do not exclude the cases when $N_1=0$ (i.e., $P=const$) or $N_2=0$ ($R=P$).
Finally, let $\varphi\in H^{\infty}$, and put
\begin{equation}
\label{fg}
\Phi(z)= R\bigg(\frac{1}{z}\bigg)+\varphi(z).
\end{equation}
Thus, $\Phi$ is analytic in $\D$ except the poles $1/\eta_1, \dots,1/\eta_r$.
The aim of this note is to describe new classes of hypercyclic Toeplitz operators $T_\Phi$
with the symbols of the form \eqref{fg}. 
\medskip

As in \cite{barl}, our conditions will be formulated in terms of the valence of $\Phi$. 
Recall that an analytic function $h$ in a domain
$D$ is said to be {\it $n$-valent in} $D$ if the equation $h(z) = w$ has 
{\it at most} $n$ solutions in $D$ counting multiplicities for any $w\in \C$. It is said to be 
\emph{exactly $n$-valent in} $D$ if this equation has exactly $n$ solutions (counting multiplicities) 
for any $w\in h(\D)$. Note that $\Phi$ (given by \eqref{fg}) has a pole of order $N_1$ at zero 
and poles of multiplicities $k_l$ at $1/\eta_l$, whence the equation $\Phi(z) = w$ has exactly 
$N$ solutions when $|w|$ is sufficiently large.

Let us first illustrate the connection between the valence of $\Phi$ and the hypercyclicity of $T_{\Phi}$ through a necessary condition.
For the case of symbols with polynomial antianalytic part the statement was proved in \cite{barl}.

\begin{proposition}
\label{ness}
Let $\Phi$ be given by \eqref{fg}. If $T_{\Phi}$ is hypercyclic, then 
the function $\Phi$ is $N$-valent in $\D$. 
\end{proposition}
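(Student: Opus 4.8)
The plan is to argue by contraposition, using the classical fact that a hypercyclic operator $T$ on a separable Hilbert space has empty adjoint point spectrum, $\sigma_p(T^*)=\emptyset$: an eigenvector $T^*g=\mu g$, $g\neq0$, would force the orbit values $\langle T^n x,g\rangle=\overline{\mu}^{\,n}\langle x,g\rangle$ of any hypercyclic vector $x$ to be dense in $\C$, which is impossible for a geometric sequence. Hence it suffices to show: if $\Phi$ is not $N$-valent in $\D$, then $\sigma_p(T_\Phi^*)\neq\emptyset$. So I fix $w$ for which $\Phi(z)=w$ has at least $N+1$ solutions in $\D$ (with multiplicity) and aim to produce $f\in H^2\setminus\{0\}$ with $T_\Phi^*f=\overline{w}f$, that is $T_{\overline{\Phi-w}}f=0$, since $T_\Phi^*=T_{\overline\Phi}$.

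First I would clear the poles. Set $\pi(z)=z^{N_1}\prod_{l}(1-\eta_l z)^{k_l}$; then $G:=\pi\,(\Phi-w)$ extends analytically across the poles of $\Phi$ and lies in $H^\infty$, and since $\pi$ vanishes exactly at the poles of $\Phi$ (where $G$ stays nonzero), the zeros of $G$ in $\D$ are precisely the solutions of $\Phi(z)=w$; in particular $G$ has at least $N+1$ zeros in $\D$. A short computation on $\T$, where $\overline z=1/z$, gives $\overline\pi=z^{-N}\prod_l(z-\overline{\eta_l})^{k_l}=z^{-N}D$ with $D$ analytic and zero-free on $\overline{\D}$, so that on $\T$
\[
\overline{\Phi-w}=\frac{\overline G}{\overline\pi}=\frac{z^{N}\,\overline G}{D}.
\]
I then factor $G=BK$, where $B$ is a finite Blaschke product of degree exactly $N+1$ whose zeros are $N+1$ of the solutions (with multiplicity) and $K=G/B\in H^\infty$.

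The main obstacle is that $\varphi\in H^\infty$ need not be continuous on $\T$, so $\overline{\Phi-w}$ is a genuinely discontinuous symbol and I cannot simply invoke Fredholm-index theory (the symbol need not even be bounded below). Instead I would strip off the analytic factors by hand. Writing the symbol as $\overline K\cdot g$ with $g:=z^{N}/(BD)\in L^\infty$ and using the elementary identity $T_{\overline\psi h}=T_{\overline\psi}T_h$, valid for $\psi\in H^\infty$ and $h\in L^\infty$, I obtain $T_{\overline{\Phi-w}}=T_{\overline K}\,T_g$, whence $\ker T_g\subseteq\ker T_{\overline{\Phi-w}}$. Moreover $1/D\in H^\infty$ is invertible, so $T_g=T_{z^{N}/B}\,M_{1/D}$, and it is enough to prove $\ker T_{z^{N}/B}\neq\{0\}$.

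This last kernel I would compute directly inside the model space $K_B=H^2\ominus BH^2$, which has dimension $N+1$. For $f\in K_B$ one has $\overline B f\in\overline{zH^2}$, so on $\T$ the function $z^{N}\overline B f=z^{N}(\overline Bf)$ has Fourier spectrum contained in $\{k\le N-1\}$; since $z^{N}/B=z^{N}\overline B$ on $\T$, the equation $T_{z^{N}/B}f=P_+(z^{N}\overline B f)=0$ reduces to the vanishing of the $N$ coefficients at the frequencies $0,1,\dots,N-1$. These are $N$ linear conditions on the $(N+1)$-dimensional space $K_B$, hence admit a nonzero solution $f$. Therefore $\ker T_{z^N/B}\neq\{0\}$, hence $\ker T_g\neq\{0\}$, and since $\ker T_g\subseteq\ker T_{\overline{\Phi-w}}$ we get $\overline w\in\sigma_p(T_\Phi^*)$, contradicting hypercyclicity. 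This forces $\Phi$ to be $N$-valent in $\D$, as claimed.
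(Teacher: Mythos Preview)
Your proof is correct. Both you and the paper argue by contraposition, showing that if $\Phi(z)=w$ has $N+1$ solutions in $\D$ then $T_{\overline\Phi}$ has $\overline w$ as an eigenvalue, but the constructions of the eigenvector differ. The paper works directly with reproducing kernels: it sets $f=\sum_{m=1}^{N+1}\beta_m k_{z_m}$, computes $T_{\overline\Phi}f=\overline w f+E$, observes that the remainder $E$ lies in the explicit $N$-dimensional span of $\{z^k,(z-\overline\eta_l)^{-j}:0\le k\le N_1-1,\ 1\le j\le k_l\}$, and then chooses the $\beta_m$ to annihilate $E$. Your route instead clears poles to produce $G=\pi(\Phi-w)\in H^\infty$, peels off a degree-$(N+1)$ Blaschke factor $B$, and uses the factorization identities $T_{\overline\psi h}=T_{\overline\psi}T_h$ and $T_{h_1h_2}=T_{h_1}M_{h_2}$ (for $h_2\in H^\infty$) to reduce everything to the kernel of $T_{z^N/B}$ restricted to the $(N+1)$-dimensional model space $K_B$; the same $(N+1)$-versus-$N$ count then finishes. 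The paper's argument is shorter and entirely elementary (no Blaschke or model-space apparatus, and it handles repeated roots with the obvious modification via derivatives of $k_\lambda$), whereas yours is more structural and treats the multiplicity issue automatically through the Blaschke product. Note that when the $z_m$ are simple one has $K_B=\mathrm{span}\{k_{z_m}\}$, so the two eigenvector searches live in essentially the same space; your final eigenvector is $D$ times an element of $K_B$, while the paper's lies in $K_B$ itself.
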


Let us also note that if $T_\Phi$ is hypercyclic, then we have
$$
\sigma(T_\Phi) \cap \T \ne \emptyset, \qquad 
\sigma(T_\Phi)\cap \Dh \ne \emptyset.
$$
The first condition is a basic property that  holds for any 
hypercyclic operator \cite[Theorem 5.6]{gp}, while the second one follows from the fact
that, for any Toeplitz operator, its spectral radius coincides with its norm \cite[Part B, Ch. 4]{nik}. 

It can be proven that if $\Phi$ is $N$-valent, then
$\sigma(T_\Phi) = \mathbb{C} \setminus \Phi(\D, N)$, where $\Phi(\D, N)$ 
consists of those $w\in \C$ for which $\Phi(z) = w$ has exactly $N$ solutions in 
$\D$. We refer to \cite[Proposition 2.2]{barl} for the details.
We do not know whether 
the hypercyclicity of $T_\Phi$ implies that $\sigma(T_\Phi) \cap \D \ne \emptyset$.
\medskip

The key step of the proof of our sufficient conditions 
for hypercyclicity is an application of the following principle, already  used in \cite{barl}:
Hypercyclicity of $T_\Phi$ can be reduced to standard cyclicity of some analytic
Toeplitz (i.e., multiplication) operator. We recall that a finite (or countable) family $U$ of functions in $H^2$
is said to be {\it cyclic} for $T_h$, $h\in H^{\infty}(\D)$, if the family $\{h^k u: u\in U, k\ge 0\}$ 
is complete in $H^2$. More specifically,
for $\lambda \in \C \setminus \overline{\Phi(\D)}$, let us set
\begin{equation}
\label{hhh}
h_{\lambda}(z) = \frac{1}{\Phi(z) - \lambda}.
\end{equation} 
Clearly, $h_{\lambda}$ is analytic and bounded in $\D$.

\begin{proposition}
\label{red}
Let $\Phi$ be given by \eqref{fg}. We assume that 
\begin{equation}
\label{spe}
\D \cap (\co \setminus \Phi(\D)) \ne \emptyset\quad{\rm and} \quad
\Dh \cap (\co \setminus \Phi(\D)) \ne \emptyset,
\end{equation}
and that, for any $\lambda \in \C \setminus \overline{\Phi(\D)}$, 
the family $\{1,z,\ldots,z^{N-1} \}$ is cyclic for $T_{h_\lambda}$ with $h_{\lambda}$ given by \eqref{hhh}.
Then $T_{\Phi}$ is hypercyclic. 
\end{proposition}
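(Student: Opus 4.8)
The plan is to apply the Godefroy--Shapiro eigenvalue criterion \cite{gosh} (see also \cite{gp}): $T_\Phi$ is hypercyclic as soon as the closed linear spans of the eigenvectors with eigenvalues in $\D$ and of those with eigenvalues in $\Dh$ are each all of $H^2$. The whole difficulty is thus to produce enough eigenvectors and to recognize that the cyclicity hypothesis is exactly what forces these two spans to be dense.

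First I would write the eigenvectors down explicitly. Set $Q(z)=z^{N_1}\prod_{l=1}^r(1-\eta_l z)^{k_l}$, a polynomial of degree $N$ whose zeros (namely $0$ and the points $1/\eta_l$) all lie in $\D$, and for $\lambda\in\C\setminus\overline{\Phi(\D)}$ put $\Psi_\lambda=Q\cdot(\Phi-\lambda)$. Clearing the poles of $\Phi$ shows $\Psi_\lambda\in H^\infty$, and since $\lambda\notin\overline{\Phi(\D)}$ it is zero-free in $\D$ with boundary modulus bounded below (because $|Q|$ is bounded below on $\T$ and $|\Phi-\lambda|\ge 1/\|h_\lambda\|_\infty$), so $\Psi_\lambda$ is invertible in $H^\infty$ and $h_\lambda=Q/\Psi_\lambda$. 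Now for any polynomial $p$ with $\deg p<N$ the function $f=p/\Psi_\lambda\in H^\infty$ satisfies $(\Phi-\lambda)f=p/Q$; as $\deg p<\deg Q$ and all zeros of $Q$ lie in $\D$, a partial-fraction expansion shows that $p/Q$ has only strictly negative Fourier coefficients, i.e. $P_+\big((\Phi-\lambda)f\big)=0$, whence $T_\Phi f=\lambda f$. This yields
\[
\ker(T_\Phi-\lambda)\supseteq \frac{1}{\Psi_\lambda}\,\mathrm{span}\{1,z,\dots,z^{N-1}\}=T_{1/\Psi_\lambda}\,\mathrm{span}\{1,z,\dots,z^{N-1}\},
\]
an $N$-dimensional space of eigenvectors.

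The key point is then the cyclicity bridge. The operator $T_{1/\Psi_\lambda}$ is bounded and invertible on $H^2$ (multiplication by an invertible $H^\infty$ symbol) and commutes with $T_{h_\lambda}$, so
\[
\overline{\mathrm{span}}\big\{T_{h_\lambda}^k (z^j/\Psi_\lambda):k\ge 0,\ 0\le j\le N-1\big\}=T_{1/\Psi_\lambda}\,\overline{\mathrm{span}}\big\{T_{h_\lambda}^k z^j:k\ge 0,\ 0\le j\le N-1\big\}.
\]
By hypothesis $\{1,\dots,z^{N-1}\}$ is cyclic for $T_{h_\lambda}$, so the right-hand side equals $T_{1/\Psi_\lambda}H^2=H^2$. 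To turn this statement about a single $T_{h_\lambda}$-orbit into a statement about eigenvectors at varying eigenvalues I would use analyticity in the spectral parameter: fixing a small open disc $\Omega\subseteq\C\setminus\overline{\Phi(\D)}$ around $\lambda$, each map $\mu\mapsto z^j/\Psi_\mu$ is analytic from $\Omega$ into $H^2$, and from $\partial_\mu(1/\Psi_\mu)=Q/\Psi_\mu^2=h_\mu/\Psi_\mu$ one gets $\partial_\mu^k(z^j/\Psi_\mu)\big|_{\mu=\lambda}=k!\,T_{h_\lambda}^k(z^j/\Psi_\lambda)$. A functional annihilating every eigenvector $z^j/\Psi_\mu$, $\mu\in\Omega$, annihilates all these $\lambda$-derivatives; hence the closed span of the eigenvectors over $\mu\in\Omega$ coincides with the cyclic subspace above, namely $H^2$.

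It remains to place such discs on both sides of $\T$. By \eqref{spe}, and since $\Phi(\D)$ is open by the open mapping theorem, the open set $\C\setminus\overline{\Phi(\D)}$ meets both $\D$ and $\Dh$; choosing $\Omega_-\subseteq\D$ and $\Omega_+\subseteq\Dh$ accordingly, the previous paragraph produces eigenvectors with eigenvalues in $\D$ (respectively in $\Dh$) whose closed span is $H^2$, and the Godefroy--Shapiro criterion then gives the hypercyclicity of $T_\Phi$. I expect the main obstacles to be the verification that $\Psi_\lambda$ is invertible in $H^\infty$ together with the precise identification of the eigenvectors, and the analytic-span argument; by contrast, the extraction of the two discs from \eqref{spe} is a routine topological point.
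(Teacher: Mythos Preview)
Your proposal is correct and follows essentially the same route as the paper: compute the eigenvectors of $T_\Phi$ as $p/\Psi_\lambda$ with $\deg p<N$, then show via a power-series/derivative expansion in the eigenvalue parameter that the span of these eigenvectors over a small disc equals $T_{1/\Psi_\lambda}$ applied to the $T_{h_\lambda}$-cyclic subspace of $\{1,\dots,z^{N-1}\}$, and conclude by Godefroy--Shapiro. The only cosmetic differences are that the paper derives the eigenvectors through an operator computation with $S^*$ and uses the geometric series $\frac{1}{\Phi-\lambda}=\frac{1}{\Phi-\lambda_0}\sum_k h^k(\lambda-\lambda_0)^k$ instead of your $\mu$-derivatives; one small caution: ``zero-free in $\D$ with boundary modulus bounded below'' does not by itself force $\Psi_\lambda^{-1}\in H^\infty$ (think of singular inner functions), so it is cleaner to argue directly that $1/\Psi_\lambda=h_\lambda/Q\in H^\infty$.
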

\medskip

Based on this important observation, we will then formulate several sufficient conditions 
for the hypercyclicity of $T_{\Phi}$. 
From now on, we assume that $\Phi$ is given by \eqref{fg} and satisfies \eqref{spe}. 
We denote by $A(\D)$ the disc algebra, i.e., the space of functions 
continuous in $\overline{\D}$ and analytic in $\D$.

The following result concerning the case of constant and maximal valence of the symbol 
is a direct extension of \cite[Theorem 1.2]{barl}. 

\begin{theorem}
\label{main2}
Assume that $\phi \in A(\D)$ and $\Phi$ satisfies the following Maximal Valence Condition\textup:
\medskip

{\bf (MVC)} the function $\Phi$ is exactly $N$-valent on $\overline{\D}\setminus 
\{0,1/\eta _1,\ldots,1/\eta_r\}$,
i.e., for any $w \in \Phi(\overline{\D})$ the equation $\Phi(z) = w$ has exactly
$N$ solutions in $\overline{\D}$ counting \medskip
multiplicities.

Then $T_{\Phi}$ is hypercyclic.
\end{theorem}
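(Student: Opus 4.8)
The plan is to deduce the statement from Proposition \ref{red}. By the standing assumption $\Phi$ already satisfies \eqref{spe}, so the only thing left to verify is that for every $\lambda \in \C \setminus \overline{\Phi(\D)}$ the family $\{1, z, \ldots, z^{N-1}\}$ is cyclic for $T_{h_\lambda}$, with $h_\lambda = 1/(\Phi-\lambda)$ given by \eqref{hhh}.

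First I would record the relevant properties of $h_\lambda$. The poles of $\Phi$, namely $0$ (of order $N_1$) and the points $1/\eta_l$ (of order $k_l$), all lie in $\D$, since $|\eta_l|>1$; near each of them $\Phi \to \infty$, so $h_\lambda \to 0$ and the singularities of $h_\lambda$ are removable. Together with the hypothesis $\phi \in A(\D)$ this gives $h_\lambda \in A(\D)$, whose zero set consists exactly of the former poles of $\Phi$, of total multiplicity $N_1+N_2 = N$. Next, since $w \mapsto 1/(w-\lambda)$ is a bijection of the Riemann sphere sending $\infty$ to $0$, the condition \textbf{(MVC)} transfers verbatim to $h_\lambda$: for every value attained on $\overline{\D}$ the equation $h_\lambda(z)=\mu$ has exactly $N$ solutions in $\overline{\D}$ (the value $\mu=0$ being accounted for by the $N$ zeros just described). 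Thus $h_\lambda$ is exactly $N$-valent on $\overline{\D}$, and the whole theorem reduces to the following cyclicity statement, which is the situation treated in \cite[Theorem 1.2]{barl}: if $b \in A(\D)$ is exactly $N$-valent on $\overline{\D}$, then $\{1, z, \ldots, z^{N-1}\}$ is cyclic for $T_b$.

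To prove this, set $\Omega = b(\D)$ and view $b \colon \D \to \Omega$ as a proper holomorphic map of degree $N$, i.e. an $N$-sheeted branched covering. Away from the finite set of critical values the fibre $b^{-1}(w)$ consists of $N$ distinct points $z_1(w), \ldots, z_N(w)$, and the elementary symmetric functions of the branches $z_i(w)$ are bounded and single-valued on $\Omega$, hence extend holomorphically across the critical values. Given $f \in A(\D)$, solving the Lagrange (Vandermonde) interpolation system $\sum_{m=0}^{N-1} G_m(w)\, z_i(w)^m = f(z_i(w))$, $1 \le i \le N$, produces coefficient functions $G_0, \ldots, G_{N-1}$ on $\Omega$ and a representation $f = \sum_{m=0}^{N-1} (G_m \circ b)\, z^m$. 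Once each $G_m$ is known to be well behaved on $\overline{\Omega}$, I would approximate it uniformly by polynomials in $w$ (a Walsh/Mergelyan type theorem, for which \textbf{(MVC)} should guarantee the needed regularity of $\partial\Omega = b(\T)$ and connectedness of the complement of $\overline{\Omega}$); substituting $w = b$ turns these into finite linear combinations of the $b^k z^m$, and since $A(\D)$ is dense in $H^2$ this yields the cyclicity of $\{1, z, \ldots, z^{N-1}\}$ for $T_b$.

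The main obstacle is the behaviour at the critical values of $b$, where the Vandermonde system degenerates and the naive coefficients $G_m$ may blow up. The crux is therefore to show that, because $f$ is single-valued and the branches collide in the prescribed algebraic manner, the apparent singularities cancel and each $G_m$ extends to a function on $\overline{\Omega}$ that is approximable by polynomials (for instance, lies in $H^2(\Omega)$ with adequate boundary regularity). Verifying this, together with the regularity of $b(\T)$ under \textbf{(MVC)} needed for the approximation step, is where the real work lies; the passage from the purely polynomial case $R=P$ of \cite{barl} to a general rational $R$ affects only the location and multiplicities of the zeros of $h_\lambda$, and not this core mechanism.
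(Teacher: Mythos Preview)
Your reduction is exactly the paper's: invoke Proposition~\ref{red}, check that $h_\lambda \in A(\D)$ inherits exact $N$-valence from $\Phi$ via the M\"obius change $w \mapsto 1/(w-\lambda)$, and then appeal to the cyclicity of $\{1,z,\dots,z^{N-1}\}$ for $T_{h_\lambda}$. The paper does not reprove this last cyclicity statement; it simply quotes it as \cite[Proposition~3.1]{barl} (not Theorem~1.2, which is the hypercyclicity theorem itself). So the first two paragraphs of your proposal already constitute a complete proof at the level of the paper.

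Where you diverge is in trying to re-derive the cyclicity lemma from scratch via Lagrange interpolation and Mergelyan-type approximation. That outline is indeed the skeleton of the argument in \cite{barl}, but as you yourself note, the two genuinely nontrivial points---that the interpolation coefficients $G_m$ extend across the critical values and that $\overline{\Omega}$ has connected complement so that polynomial approximation applies---are left as assertions rather than proofs. In particular, exact $N$-valence on $\overline{\D}$ does force $\C\setminus\overline{\Omega}$ to be connected (via an argument-principle/winding-number count), and the $G_m$ are handled as symmetric functions of the branches, but neither of these is a one-liner. Since the paper treats \cite[Proposition~3.1]{barl} as a black box, your sketch is strictly more than what is needed here; if you want to keep it, you should either complete those two steps or replace the sketch by the citation.
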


Theorem \ref{main2}, however, does not apply if $\Phi$ has varying valence in $\overline{\D}$.
A novel feature of the present note is that we make use of deep results of B. Solomyak about cyclic families
for analytic Toeplitz operators. Together with Proposition \ref{red}, this allows us to significantly enlarge 
the class of symbols inducing hypercyclic Toeplitz operators and, 
in particular, to include symbols with varying valence. These results are new even for 
symbols with polynomial antianalytic part studied in \cite{barl}.

Cyclicity of families of functions for analytic Toeplitz operator is very far from being understood; deep results 
in this direction were obtained by  B.~Solomyak \cite{sol} and B.~Solomyak and A.~Volberg \cite{solv} in 1980-s.
We use the geometric conditions on the valence of the function $\Phi$ given in 
\cite{sol} to distinguish two more classes of hypercyclic Toeplitz
operators.

\begin{theorem}
\label{main3}
Assume that $\phi\in A(\D)$ and, for some $\lambda \in \C \setminus \overline{\Phi(\D)}$,
the function $h_{\lambda}$ given by \eqref{hhh} satisfies the 
Increasing Argument Condition\textup:
\medskip

{\bf (IAC)} 
The set $h_{\lambda}(\T)$ is a finite union of $C^2$-smooth Jordan arcs and
some continuous branch of the function $t \mapsto \arg h_{\lambda}(e^{it})$ is \textup(strictly\textup) 
increasing on $[0, 2\pi]$.
\medskip

Then $T_{\Phi}$ is hypercyclic.
\end{theorem}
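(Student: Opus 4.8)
The plan is to deduce hypercyclicity of $T_\Phi$ from Proposition \ref{red}. Since the standing assumption \eqref{spe} is in force, it suffices to show that the family $\{1,z,\ldots,z^{N-1}\}$ is cyclic for $T_{h_\lambda}$ for \emph{every} $\lambda\in\C\setminus\overline{\Phi(\D)}$. The hypothesis, however, only furnishes the geometric condition (IAC) for a single $\lambda_0$. My first move is therefore to argue that the cyclic families of $T_{h_\lambda}$ do not depend on $\lambda$, so that it is enough to treat $\lambda_0$.

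To this end I would observe that for $\lambda,\lambda'\in\C\setminus\overline{\Phi(\D)}$ one has $h_{\lambda'}=\psi\circ h_\lambda$, where $\psi(w)=w/\big(1+(\lambda-\lambda')w\big)$ is a M\"obius transformation whose only pole sits at $1/(\lambda'-\lambda)$. Since $h_\lambda(z)=1/(\Phi(z)-\lambda)$ attains the value $1/(\lambda'-\lambda)$ only where $\Phi=\lambda'$, and $\lambda'\notin\overline{\Phi(\D)}$, this pole lies outside $\overline{h_\lambda(\D)}$, which is exactly the spectrum $\sigma(T_{h_\lambda})$ of the analytic Toeplitz operator $T_{h_\lambda}$. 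Thus $\psi$ is holomorphic on a neighborhood of $\sigma(T_{h_\lambda})$, and the holomorphic functional calculus yields $T_{h_{\lambda'}}=\psi(T_{h_\lambda})$ inside the norm-closed algebra generated by $T_{h_\lambda}$. By symmetry the two operators generate the same algebra, hence share the same closed invariant subspaces and the same cyclic families. It therefore suffices to prove that $\{1,\ldots,z^{N-1}\}$ is cyclic for the single operator $T_{h_{\lambda_0}}$.

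For this last step I would invoke Solomyak's theorem from \cite{sol}, after checking its hypotheses for $f:=h_{\lambda_0}$. Because $\phi\in A(\D)$ and $\lambda_0\notin\overline{\Phi(\D)}$, the function $\Phi-\lambda_0$ is bounded away from $0$ on $\overline{\D}$ except at the poles of $\Phi$, where $h_{\lambda_0}$ extends continuously by $0$; hence $h_{\lambda_0}\in A(\D)$. In $\D$ the poles of $\Phi$ are precisely $0$, of order $N_1$, and $1/\eta_1,\ldots,1/\eta_r$, of orders $k_1,\ldots,k_r$, so $h_{\lambda_0}$ has no poles in $\overline{\D}$ and its only zeros there have total multiplicity $N_1+N_2=N$; by the argument principle this matches the total increase $2\pi N$ of the strictly increasing branch of $\arg h_{\lambda_0}(e^{it})$ supplied by (IAC). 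With $h_{\lambda_0}(\T)$ a finite union of $C^2$ Jordan arcs of monotone argument, Solomyak's theorem produces a cyclic family of $N$ functions, which in the present setting one takes to be the monomials $\{1,\ldots,z^{N-1}\}$ required by Proposition \ref{red}, thereby giving the hypercyclicity of $T_\Phi$. I expect the main obstacle to lie exactly here: one must fit the geometric hypotheses on the winding curve and its valence to the present data, ensuring both that the number of generators Solomyak's theorem delivers equals $N$ and that the cyclic family may be chosen to be $\{1,\ldots,z^{N-1}\}$; by comparison, the $\lambda$-independence of cyclicity above is a more routine, if essential, reduction.
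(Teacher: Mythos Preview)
Your overall plan coincides with the paper's: apply Solomyak's Theorem~\ref{so1} at the single $\lambda_0$ for which (IAC) holds, transfer cyclicity of $\{1,\dots,z^{N-1}\}$ from $T_{h_{\lambda_0}}$ to every $T_{h_\mu}$ with $\mu\in\C\setminus\overline{\Phi(\D)}$, and then invoke Proposition~\ref{red}. The M\"obius relation $h_\mu=\psi\circ h_{\lambda_0}$ with pole off $\overline{h_{\lambda_0}(\D)}$ is also exactly what the paper exploits.

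The gap is in the sentence ``the holomorphic functional calculus yields $T_{h_{\lambda'}}=\psi(T_{h_\lambda})$ inside the norm-closed algebra generated by $T_{h_\lambda}$.'' The Riesz--Dunford calculus only guarantees that $\psi(T)$ lies in the bicommutant; it falls into the norm-closed unital algebra generated by $T$ precisely when $\C\setminus\sigma(T)$ is connected, so that Runge's theorem lets one approximate $\psi$ uniformly on $\sigma(T)$ by polynomials. (For the bilateral shift $U$ and $\psi(z)=1/z$ one gets $\psi(U)=U^*$, which is \emph{not} in the norm-closed algebra generated by $U$.) What is missing is therefore the observation --- and this is exactly the point the paper isolates --- that (IAC) forces $h_{\lambda_0}(\overline{\D})$ to have \emph{no holes}: a curve whose argument is strictly increasing cannot enclose a bounded complementary component. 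Once that is said, your functional-calculus phrasing and the paper's formulation (approximate the identity uniformly on $h_{\lambda_0}(\overline{\D})$ by elements of $\mathrm{span}\{(w-c_3)^{-k}:k\ge0\}$, hence $h_{\lambda_0}\in\overline{\mathrm{span}}\{h_\mu^k\}$) become the same argument. So the reduction step is correct in spirit but needs this one extra line; without it the ``by symmetry the two operators generate the same algebra'' claim is unjustified.

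For the Solomyak step the paper simply cites \cite[Sect.~1.3, Rem.~3]{sol}: for any $N$-valent $h$ meeting the hypotheses of Theorem~\ref{so1}, the family $\{1,z,\dots,z^{N-1}\}$ automatically satisfies (P1)--(P2) and is therefore cyclic for $T_h$. Your closing paragraph anticipates this correctly; there is no hidden obstacle there.
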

\medskip

To state the last (and, in a sense, more general) result, we need to introduce some 
notations (see \cite[Paragraph 1.3]{sol}).
Let $h$ be a meromorphic function in $\overline{\D}$ (i.e., meromorphic in $\{|z|<\rho\}$ for some
$\rho>1$) with poles $\eta_1,\ldots,\eta_s$ in $\D$. Then the set $h(\T)$ 
splits the plane into finitely many connected components. 
Let $\Omega_h^{(k)}$ be the union of the connected components of $\mathbb{C}\setminus h(\T)$ 
in which the number of pre-images in $\D\setminus \{\eta_1,\ldots,\eta_s\}$ of $h$ is equal to $k$ 
(counting multiplicity). The bounded connected components of the set 
$\co \setminus \overline{h(\D}) = :\Omega_h^{(0)}$ will be called \textit{holes}.

We say that $h$ is a function {\it of general position} if it is analytic in a neighborhood of $\T$,
the curve $t \mapsto h (e^{it})$ has only finitely many points of self-intersection which 
are simple and transversal, and $h'|_{\T} \neq 0$.

Finally, let us say that two disjoint open sets $U$ and $V$ in $\mathbb{C}\setminus h(\T)$ are 
\emph{adjacent} if $\overline{U}\cap \overline{V}$ contains $h(I)$ for some open arc $I \subset \T$. 
Then we remark that if $h$ is meromorphic in $\overline{\D}$ and of general position, then 
a connected component of $\Omega_h^{(k_1)}$ and a connected component of $\Omega_h^{(k_2)}$ 
are adjacent if and only if $k_1=k_2 -1$ or $k_1=k_2 +1$. 
Moreover, if for some $\lambda \in \Omega_h^{0}$, $h_{\lambda}$ is 
given by \eqref{hhh}, then $h_{\lambda}$ is of general position if and only if so $\Phi$ is. 

\begin{theorem}
\label{main4}
Assume that $\Phi$ is of general position and satisfies the following 
Decreasing Valence Condition\textup:
\medskip

{\bf (DVC)}
For some $\lambda_0 \in \D\cap \Omega_{\Phi}^{(0)}$ and 
$\lambda_1 \in \widehat{\D} \cap \Omega_{\Phi}^{(0)}$, for 
any $j\in \{0,1\}$, and for any component $G \subset \Omega_{\Phi}^{(k)}, k \geq 1,$ 
there exist connected components $G_i$ of $\Omega_{\Phi}^{(i)}$, $1\leq i \leq k$, 
such that $G=G_k$, the components  $G_i$ and $G_{i-1}$ are adjacent for $1 < i \le k$, and $G_1$ is adjacent to the 
hole which contains $\lambda _j$.
\medskip

Then $T_{\Phi}$ is hypercyclic.
\end{theorem}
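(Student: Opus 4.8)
The plan is to verify the hypotheses of Proposition~\ref{red}, which reduces the hypercyclicity of $T_\Phi$ to two separate tasks: the spectral condition \eqref{spe}, and the cyclicity of $\{1,z,\ldots,z^{N-1}\}$ for $T_{h_\lambda}$ for every $\lambda\in\C\setminus\overline{\Phi(\D)}$. First I would identify this parameter set. Since $N=N_1+N_2\ge 1$, the function $\Phi$ has at least one pole in $\D$ (of order $N_1$ at $0$ when $N_1\ge1$, otherwise at some $1/\eta_l$), so $\Phi(\D)$ contains a neighbourhood of $\infty$; hence $\C\setminus\overline{\Phi(\D)}$ is bounded and coincides with $\Omega_\Phi^{(0)}$, the union of the holes. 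In particular the points $\lambda_0\in\D\cap\Omega_\Phi^{(0)}$ and $\lambda_1\in\widehat{\D}\cap\Omega_\Phi^{(0)}$ supplied by (DVC) lie in $\C\setminus\overline{\Phi(\D)}$, which yields both halves of \eqref{spe} at once.

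Next I would fix $\lambda$ in a hole and transport the geometry through the M\"obius map $M_\lambda(w)=1/(w-\lambda)$, so that $h_\lambda=M_\lambda\circ\Phi$. For $w\notin\Phi(\T)$ the equations $\Phi(z)=w$ and $h_\lambda(z)=M_\lambda(w)$ have the same solutions in $\D$, so the number of preimages is preserved; thus $M_\lambda$ maps the partition $\{\Omega_\Phi^{(k)}\}$ onto $\{\Omega_{h_\lambda}^{(k)}\}$, respecting both the valence grading and the adjacency relation. By the remark preceding the statement, $h_\lambda$ is again of general position and analytic near $\overline{\D}$. The one structural change is that $M_\lambda$ sends the hole containing $\lambda$ to the unbounded component of $\C\setminus h_\lambda(\T)$, while the remaining holes of $\Phi$ become the bounded holes of $h_\lambda$.

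The core of the proof is Solomyak's cyclicity criterion \cite{sol}: for a function of general position, $\{1,z,\ldots,z^{N-1}\}$ is cyclic for $T_{h_\lambda}$ once every component of $\Omega_{h_\lambda}^{(k)}$ with $k\ge1$ can be joined to the unbounded component by a chain of pairwise adjacent components along which the valence drops by one at each step. Pulling this back through $M_\lambda$, and recalling that $M_\lambda$ carries the hole containing $\lambda$ onto the unbounded component, the hypothesis becomes: every $G\subset\Omega_\Phi^{(k)}$, $k\ge1$, carries a descending chain $G=G_k,G_{k-1},\ldots,G_1$ of adjacent components with $G_i\subset\Omega_\Phi^{(i)}$ and $G_1$ adjacent to the hole containing $\lambda$. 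For $\lambda$ equal to $\lambda_0$ or to $\lambda_1$ this is precisely the chain asserted by (DVC), so Solomyak's theorem applies and, together with Proposition~\ref{red}, would deliver the hypercyclicity of $T_\Phi$.

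The step I expect to be the main obstacle is matching Solomyak's hypothesis to (DVC) uniformly in $\lambda$. Two points require care: (i) one must confirm that general position and the full valence/adjacency combinatorics genuinely survive the change of variable $M_\lambda$ --- this is the content of the concluding remark of the section and should be checked in detail; and (ii) Solomyak's criterion, as transported, asks for a descending chain terminating at the \emph{specific} hole containing $\lambda$, whereas (DVC) provides such chains only to the holes of $\lambda_0$ and $\lambda_1$. Reconciling these is the crux. The resolution is that the two base points, with $\lambda_0\in\D$ and $\lambda_1\in\widehat{\D}$, not only secure \eqref{spe} but also furnish, through the mechanism behind Proposition~\ref{red}, dense families of eigenvectors with eigenvalues ranging over open subsets of $\D$ and of $\widehat{\D}$ that may be taken inside the holes of $\lambda_0$ and $\lambda_1$ respectively; equivalently, one checks that Solomyak's condition is insensitive to which valence-$0$ component serves as the sink, so that the (DVC)-chains to the hole of $\lambda_0$ (or $\lambda_1$) already certify cyclicity of $T_{h_\lambda}$ in each spectral regime. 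Pinning down this last insensitivity is where the argument needs to be made precise.
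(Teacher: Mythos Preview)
Your overall approach matches the paper's: verify \eqref{spe} from the existence of $\lambda_0,\lambda_1$, transport the valence/adjacency combinatorics through $M_\lambda(w)=1/(w-\lambda)$ to translate (DVC) into Solomyak's (DVC') for $h_{\lambda}$, and then invoke Theorem~\ref{so2}. Your first resolution in the final paragraph is the correct one and is exactly what the paper does, formalized as Remark~\ref{yuu}: a look at the proof of Proposition~\ref{red} shows that cyclicity of $\{1,\ldots,z^{N-1}\}$ for $T_{h_\lambda}$ is needed only at \emph{two} points --- any $\lambda_0\in\D\cap\Omega_\Phi^{(0)}$ and any $\lambda_1\in\widehat\D\cap\Omega_\Phi^{(0)}$ --- not for all $\lambda\in\C\setminus\overline{\Phi(\D)}$. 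With that in hand, your M\"obius transport gives precisely Remark~\ref{DVCprime}: $h_{\lambda_0}$ and $h_{\lambda_1}$ are analytic on $\overline\D$, of general position, and satisfy (DVC'); Theorem~\ref{so2} then yields cyclicity and the proof is complete.

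You should, however, discard the ``equivalently'' clause: the assertion that Solomyak's (DVC') is insensitive to which valence-$0$ component serves as the sink is not true and is not needed. The map $M_\lambda$ sends exactly the hole containing $\lambda$ to the unbounded component of $\C\setminus h_\lambda(\T)$, and (DVC') demands chains terminating at that specific component; if $\lambda$ lies in a hole other than those of $\lambda_0,\lambda_1$, (DVC') for $h_\lambda$ may fail. (The trick used in the proof of Theorem~\ref{main3} to pass from one $\lambda$ to another relies on $h_{\lambda_0}(\overline\D)$ having no holes, which the (IAC) geometry guarantees but the (DVC) geometry does not.) So ``pinning down this last insensitivity'' is the wrong target: there is nothing to pin down, because the first route via Remark~\ref{yuu} already closes the argument without ever touching $\lambda$'s outside the two distinguished holes.
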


We shall observe that under the assumption of the previous theorem, 
condition (DVC) also reads as follows: For some $\lambda_0 \in \D\cap \Omega_{\Phi}^{(0)}$ 
and some $\lambda_1 \in \widehat{\D} \cap \Omega_{\Phi}^{(0)}$, and for any 
$w\in \C \setminus \overline{\Phi(\D)}$, there exist continuous paths 
$\gamma_0,\,\gamma_1:[0,1]\rightarrow \C$ with $\gamma_i(0)=w$ and 
$\gamma_i(1)=\lambda_i$ such that $\gamma_i \cap \Omega_{\Phi}^{(k)}$ is connected for any $k \geq 0$ and $i=0,1$.
\medskip{}

\begin{remark}
\label{DVCprime}
{\rm Let $\lambda_0 \in \D\cap \Omega_{\Phi}^{(0)}$ and 
$\lambda_1 \in \widehat{\D} \cap \Omega_{\Phi}^{(0)}$. Then it is easily seen that $\Phi$ satisfies 
(DVC) for $\lambda _0$ and $\lambda _1$ if and only if both functions $h=\frac{1}{\Phi-\lambda_0}$ 
and $h=\frac{1}{\Phi-\lambda_1}$ are analytic in $\overline{\D}$, are of general position, 
and satisfy the following 
\medskip
condition:

\textbf{(DVC')}} For any component $G \subset \Omega_{h}^{(k)}, k \geq 1,$ there 
exist connected components $G_i$ of $\Omega_{h}^{(i)}$, $1\leq i \leq k$, 
such that $G=G_k$, the components $G_i$ and $G_{i-1}$ are adjacent for $1 < i \le k$, and $G_1$ is adjacent to the 
unbounded connected component of $\Omega_{h}^{(0)}$. 
\end{remark}

Pictures of domains having this property or not can be found in \cite[Fig. 1, Page 812]{sol}
and in Figures \ref{fig:fig1} and \ref{fig:fig2} below.
\medskip{}

In the next section we provide examples 
of symbols $\Phi$ of the form \eqref{fg} satisfying (MVC), (IAC) or (DVC).
The proofs of Propositions \ref{ness}--\ref{red} and Theorems \ref{main2}--\ref{main4} 
are postponed to Sections \ref{nec} and \ref{suff}.
\bigskip


\section{Examples}

\begin{example}
\label{ex1}
{\rm Let $\Psi$ be a univalent function in $A(\D)$ which maps
$\D$ onto a Jordan domain $\Omega$ with $0\in \Omega$ and let $B$ be a finite Blaschke product
of degree $N$. Let $\gamma\in \D$ be such that $\Psi(\gamma) = 0$. Then 
$$
\Phi(z) = \frac{1}{\Psi\circ B(z)}  = \sum_{l=1}^r \sum_{j=1}^{k_l} 
\frac{a_{l, j}}{(z-\lambda_l)^j} +\phi(z),
$$
where $\lambda_1, \dots \lambda_l$ are such that $B(\lambda_l) = \gamma$ with multiplicity $k_l$
and $\phi\in A(\D)$. Thus, $\Phi$ is a symbol of the form \eqref{fg} which satisfies (MVC). }
\end{example}

\begin{example}
\label{ex2}
{\rm Another example of a symbol satisfying (MVC) can be obtained as follows. 
Let $\Omega$ be a Jordan domain invariant under rotation by $2\pi /N$, for some $N \in \mathbb{N}$. 
and let $\Psi: \D \to \Omega$ be the conformal mapping with $\Psi(0)=0$.
Then it is clear that $\Phi = 1/\Psi^N$ will be exactly $N$-valent in $\D$.}
\end{example}

\begin{example}
\label{ex3}
{\rm It is easy to provide with examples of $\Phi$ satisfying (IAC) as small 
perturbation of $z^n$. Indeed, let $\varepsilon>0$, $\lambda \in \C$ and 
$\Psi \in A(\D)\cap\mathcal{C}^1(\overline{\D})$. Then one defines 
$$
\Phi(z)=\lambda + [z^n(1+\varepsilon\Psi(z))]^{-1}.
$$
Clearly, $\Phi$ is of the form $P(1/z)+\varphi(z)$, and a direct computation gives 
that the derivative of $t\mapsto \arg h (e^{it})$ is equal to $n+ \mathcal{O}(\varepsilon)$, 
where $h= (\Phi -\lambda)^{-1}$. So $\Phi$ satisfies (IAC) for any $\varepsilon$ small enough. }
\end{example}
             
\begin{example}
\label{ex4}
{\rm Let us first build a $2$-valent function $h\in H(\overline{\D})$, of general position, which satisfies 
(DVC') and has a zero of order $2$ at $0$. To do so, let us first (the steps are illustrated in Figure 
\ref{fig:fig1}) consider the conformal map $\Psi$ which sends $\D$ onto the sector of annulus $\Gamma$ given by
\[
\Gamma=\left\{z\in \C:\, r<|z|<R,\,-\frac{\pi}{4}+\alpha < \arg z < \pi-\alpha\right\},
\]
with $0<r<R<(2\sqrt{2}-1)r$, $0< \alpha < \frac{\pi}{8}$. Then step 2 consists in composing $\Psi$ with  
the map $(1+\varepsilon z)z^2-\beta (1+i)$, where $\varepsilon >0$ is very small and 
$\beta$ is such that $0<\tilde c< c< \tilde a<a< \tilde b<b<\tilde d <d$ (see Figure 
\ref{fig:fig1} for the definitions of $a,\tilde a,b, \tilde b,c,\tilde c,d,\tilde d$).
Note that such $\beta$ exists because of the choice of $r$ and $R$. 
Thus, the resulting map is $g(z) = (1+\varepsilon \Psi(z))\Psi(z)^2 - \beta (1+i)$.
There is, however, a small problem that $g$ is not analytic in $\overline{\D}$. To overcome it
consider $g(\rho z)$ for some $\rho \in (0,1)$ close enough to $1$. Finally, put
$$
h(z)=g^2(\rho z) = \big[(1+\varepsilon \Psi(\rho z))\Psi(\rho z)^2 - \beta (1+i)\big]^2. 
$$
The resulting domain $h(\D)$ is shown in the last part of Figure \ref{fig:fig1}. 

It is easily seen that $h$ is $2$-valent, of general position, has zero of order $2$ at $0$ 
and satisfies (DVC').
Note also that $(h-\mu)^{-1}$ also satisfies (DVC') for any $\mu \notin h(\overline{\D})$.

If we put $\Phi = 1/h$, then it is clear that $\Phi (z) = a_1 z^{-2} +a_2 z^{-1} + \phi(z)$,
where $\phi$ is analytic in $\D$. Thus, we can take $\lambda_0\in \D$ so that $h_{\lambda_0}$ has (DVC') (for e.g., $\lambda_0=0$). It remains to find
$\lambda_1$. Note, that 
\begin{equation*}
h_{\lambda_1}  = \frac1{\Phi-\lambda_1} = 
-\frac{1}{\lambda_1} - \frac{1}{\lambda_1^2(h-\lambda_1^{-1})}.
\end{equation*} 
Thus, if we can choose $\lambda_1 \in \widehat{\D}$ so that $1/\lambda_1 \notin h(\overline{\D})$
(which is possible for most of the choices of $r$ and $R$) we conclude that
$\Phi$ is of general position and has (DVC). By construction, $\Phi$ satisfies neither (MVC) nor (IAC).}
\end{example}

\begin{example}
\label{ex5}
{\rm Choosing the parameter $\beta$ in a good way we can modify the construction 
in Example \ref{ex4} so that 
$0< \tilde a<a< \tilde c< c< \tilde b<b<\tilde d <d$.
Then, repeating the construction, we build $\Phi$ of general position, 
which does not satisfy the condition (DVC') (see Figure \ref{fig:fig2}).} 
\end{example}

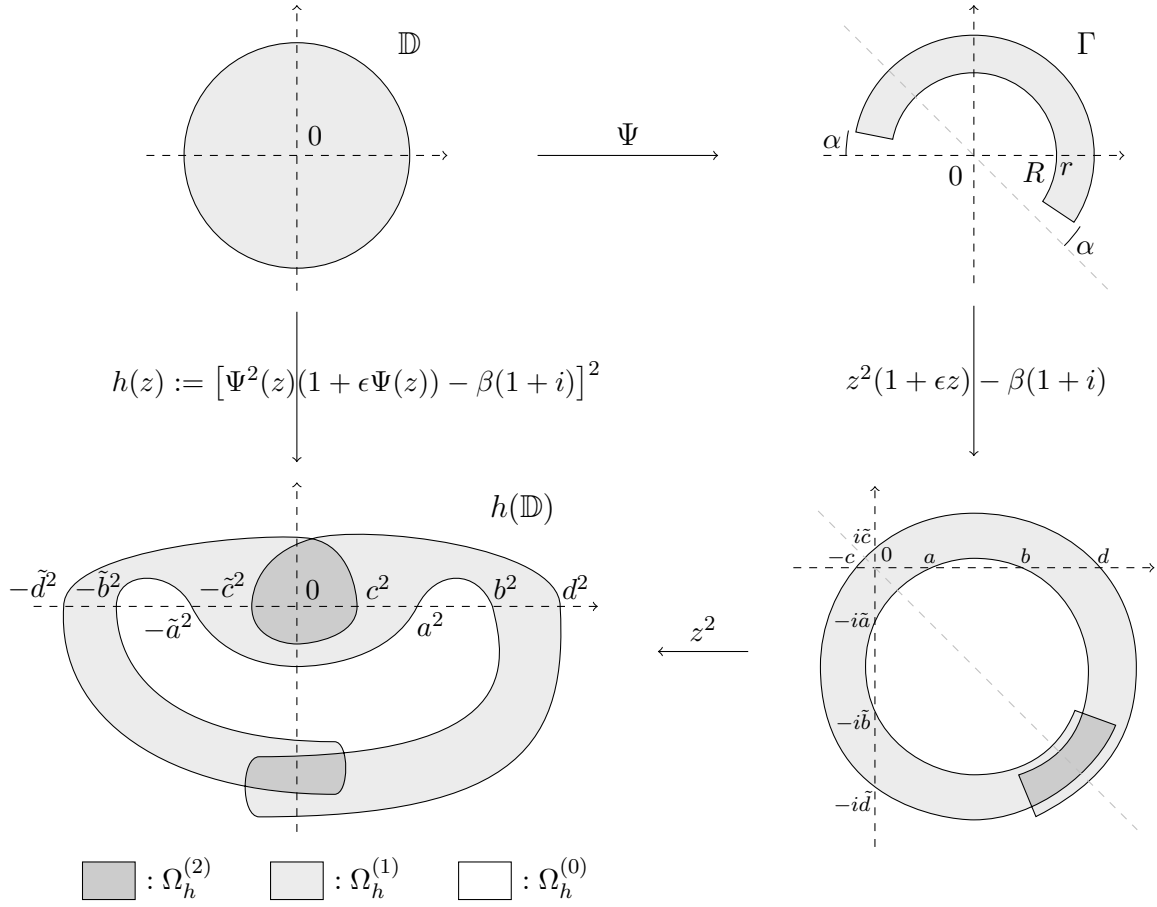
\begin{figure}
\centering
\begin{tikzpicture}

----------------
----------------

\fill[gray!15] (2,2) circle (1.5cm);

\draw[black] (2,2) circle (1.5cm);

\node[above right] at (2, 2) {{\small $0$}};

\node at (3.5, 3.5) {$\D$};

\draw[->, dashed] (2,0.2) -- (2,4);

\draw[->, dashed] (0,2) -- (4,2);

\fill[gray!15, shift={(1,0)}] ({10 + 1.6 * cos (-33.75)},{2 + 1.6 * sin (-33.75)}) arc(-33.75:168.75:1.6cm) -- ++(168.75:-0.5) arc(168.75:-33.75:1.1cm)      -- ++(-33.75:0.5)    -- cycle;

\draw[shift={(1,0)}] ({10 + 1.6 * cos (-33.75)},{2 + 1.6 * sin (-33.75)}) arc(-33.75:168.75:1.6cm) -- ++(168.75:-0.5) arc(168.75:-33.75:1.1cm)      -- ++(-33.75:0.5)    -- cycle;

\draw[->, dashed, shift={(1,0)}] (10,0.3) -- (10,4);

\draw[->, dashed, shift={(1,0)}] (8,2) -- (12,2);

\node at (12.5,3.5) {$\Gamma$};

\node[below right, shift={(1,0)}] at (11,2.05) {{\small $r$}} node[below right] at (11.5,2.05) {{\small $R$}} node[below left] at (11,2) {{\small $0$}};

\draw[shift={(1,0)}] ({10 + 1.7 * cos (168.75)},{2 + 1.7 * sin (168.75)}) arc (168.75:180:1.7cm);

\draw[shift={(1,0)}] ({10 + 1.7 * cos (-33.75)},{2 + 1.7 * sin (-33.75)}) arc (-33.75:-45:1.7cm);

\node[below left, shift={(1,0)}] at ({10.05 + 1.7 * cos (168.75)},{2.08 + 1.7 * sin (168.75)}) {{\small $\alpha$}};

\node[below right, shift={(1,0)}] at ({9.8 + 1.7 * cos (-33.75)},{1.95 + 1.7 * sin (-33.75)}) {{\small $\alpha$}};



\draw[->, shift={(1,0)}] (4.2,2) -- (6.6,2);

\draw[dashed, lightgray, shift={(1,0)}] ({9.3 + 1.45 * cos (135)},{2.7 + 1.45 * sin (135)}) -- ++(315:5);

\node[above, shift={(1,0)}] at (5.4,2) {$\Psi$};

----------------
----------------

\draw[->] (11,0) -- (11,-2);

\node[right, shift={(-1.85,0)}] at (11,-1) {{\small $z^2(1+\epsilon z)-\beta (1+i)$}};

\fill[gray!15, shift={(-1,-7.2)}, scale=1.2] ({10 + 1.8 * cos (-67.55)},{2 + 1.8 * sin (-67.55)}) ..controls +(1,0.5) and +(0,-0.5).. ({10 + 1.8 * cos (0)},{2 + 1.8 * sin (0)})
..controls +(0,1) and +(1,0).. ({10 + 1.7 * cos (90)},{2 + 1.7 * sin (90)})
..controls +(-1,0) and +(0,0.9).. ({10 + 1.7 * cos (180)},{2 + 1.7 * sin (180)})
..controls +(0,-1.5) and +(-0.35,0) .. ({10 + 1.7 * cos (270)},{2 + 1.7 * sin (270)})
..controls +(0.7,0) and +(-0.2,-0.5) .. ({10 + 1.7 * cos (337.5)},{2 + 1.7 * sin (337.5)})
-- ++(340:-0.48)
..controls +(-0.2,-0.5) and +(0.5,0) .. ({10 + 1.2 * cos (270)},{2 + 1.2 * sin (270)})
..controls +(-0.5,0) and +(0,-0.8) .. ({10 + 1.2 * cos (180)},{2 + 1.2 * sin (180)})
..controls +(0,0.8) and +(-0.5,0) .. ({10 + 1.2 * cos (90)},{2 + 1.2 * sin (90)})
..controls +(0.9,0) and +(0,0.5) .. ({10 + 1.27 * cos (-2)},{2 + 1.27 * sin (-2)})
..controls +(0,-0.5) and +(0.6,0.2) .. ({10 + 1.3 * cos (-67.55)},{2 + 1.3 * sin (-67.55)})
-- cycle
;

\fill[gray!40, shift={(-1,-7.2)}, scale=1.2] ({10 + 1.68 * cos (-67.55)},{2 + 1.68 * sin (-67.55)}) ..controls +(0.3,0.1) and +(-0.1,-0.33).. ({10 + 1.7 * cos (-22.45)},{2 + 1.7 * sin (-22.45)})
-- ++(340:-0.39)
..controls +(-0.1,-0.3) and +(0.36,0.1) .. ({10 + 1.3 * cos (-67.55)},{2 + 1.3 * sin (-67.55)})
-- cycle
;

\draw[shift={(-1,-7.2)}, scale=1.2] ({10 + 1.8 * cos (-67.55)},{2 + 1.8 * sin (-67.55)}) ..controls +(1,0.5) and +(0,-0.5).. ({10 + 1.8 * cos (0)},{2 + 1.8 * sin (0)})
..controls +(0,1) and +(1,0).. ({10 + 1.7 * cos (90)},{2 + 1.7 * sin (90)})
..controls +(-1,0) and +(0,0.9).. ({10 + 1.7 * cos (180)},{2 + 1.7 * sin (180)})
..controls +(0,-1.5) and +(-0.35,0) .. ({10 + 1.7 * cos (270)},{2 + 1.7 * sin (270)})
..controls +(0.7,0) and +(-0.2,-0.5) .. ({10 + 1.7 * cos (337.5)},{2 + 1.7 * sin (337.5)})
-- ++(340:-0.48)
..controls +(-0.2,-0.5) and +(0.5,0) .. ({10 + 1.2 * cos (270)},{2 + 1.2 * sin (270)})
..controls +(-0.5,0) and +(0,-0.8) .. ({10 + 1.2 * cos (180)},{2 + 1.2 * sin (180)})
..controls +(0,0.8) and +(-0.5,0) .. ({10 + 1.2 * cos (90)},{2 + 1.2 * sin (90)})
..controls +(0.9,0) and +(0,0.5) .. ({10 + 1.27 * cos (-2)},{2 + 1.27 * sin (-2)})
..controls +(0,-0.5) and +(0.6,0.2) .. ({10 + 1.3 * cos (-67.55)},{2 + 1.3 * sin (-67.55)})
-- cycle
;

\draw[->, dashed, shift={(-1,-7.2)}, scale=1.2] (8,{2 + 1.55 * sin (135)}) -- (12,{2 + 1.55 * sin (135)});

\draw[->, dashed, shift={(-1,-7.2)}, scale=1.2] ({10 + 1.55 * cos (135)},0) -- ({10 + 1.55 * cos (135)},4);

\draw[shift={(-1,-7.2)}, scale=1.2] node[above right] at ({9.95 + 1.55 * cos (135)}, {1.95 + 1.55 * sin (135)}) {{\tiny $0$}} node[above right] at (11.25, {2 + 1.45 * sin (135)}) {{\tiny $d$}} node[above right] at (10.4, {2 + 1.45 * sin (135)}) {{\tiny $b$}} node[above left] at (9.7, {2 + 1.45 * sin (135)}) {{\tiny $a$}} node[above left] at (8.8, {1.98 + 1.45 * sin (135)}) {{\tiny $-c$}} node[left] at ({10 + 1.45 * cos (135)},3.45) {{\tiny $i\tilde c$}} node[left] at ({10 + 1.45 * cos (135)},2.5) {{\tiny $-i\tilde a$}} node[left] at ({10 + 1.45 * cos (135)},1.4) {{\tiny $-i\tilde b$}} node[left] at ({10 + 1.45 * cos (135)},0.5) {{\tiny $-i\tilde d$}} [dashed, lightgray] ({9.3 + 1.45 * cos (135)},{2.7 + 1.45 * sin (135)}) -- ++(315:5);

-------------------------------
-------------------------------
	
--------------
--------------

\draw[->] (2,-0.08) -- (2,-2.08);

\node[right, shift={(-2.605,0)}] at (2,-1) {{\small $h(z):=\left[\Psi^2(z)(1+\epsilon \Psi(z))-\beta (1+i)\right]^2$}};
	
\draw[->] (8,-4.6) -- (6.8,-4.6) node[above] at (7.4,-4.6) {{\small $z^2$}};

	\fill[gray!15, shift={(-2.3,-1)}] (1.2,-3) ..controls +(0,0.8) and +(-0.5,0.1).. (4.5,-2.1)
	
	..controls +(0.5,-0.2) and +(0,0.2).. (5.1,-3)
	
	..controls +(-0,-0.4) and +(0.2,0).. (4.3,-3.5)
	
	..controls +(-0.2,0) and +(0,-0.4).. (3.7,-3)
	
	..controls +(0,0.4) and +(-0.7,-0.2).. (4.5,-2.1)
	
	..controls +(0.7,0.2) and +(0,0.8).. (7.8,-3)
	
	..controls +(0.1,-1.8) and +(3.5,0).. (3.8,-5.8)
	
	..controls +(-0.25,0) and +(-0.25,0).. (3.8,-5)
	
	..controls +(3.5,0) and +(0.2,-0.8).. (6.9,-3)
	
	..controls +(-0.2,0.5) and +(0.25,0.5).. (5.9,-3)
	
	..controls +(-0.35,-0.7) and +(0.35,0).. (4.3,-3.8)
	
	..controls +(-0.35,0) and +(0.35,-0.7).. (2.9,-3)
	
	..controls +(-0.25,0.5) and +(0,0.5).. (1.9,-3)
	
	..controls +(0,-1) and +(-2,0).. (4.8,-4.8)
	
	..controls +(0.2,0) and +(0.2,0).. (4.8,-5.5)
	
	..controls +(-2.2,0) and +(-0.1,-1.5).. (1.2,-3) -- cycle;

	\fill[gray!40, shift={(-2.3,-1)}] (4.5,-2.1) ..controls +(0.5,-0.1) and +(0,0.2).. (5.1,-3)
	
	..controls +(-0,-0.4) and +(0.2,0).. (4.3,-3.5)
	
	..controls +(-0.2,0) and +(0,-0.4).. (3.7,-3)
	
	..controls +(0,0.2) and +(-0.7,-0.2).. (4.5,-2.1) --cycle;

	\fill[gray!40, shift={(-2.3,-1)}] (3.8,-5) ..controls +(1,0.005) and +(-0.75,-0.05).. (4.925,-4.95)
	
	..controls +(0.03,-0.1) and +(0.2,0).. (4.8,-5.5)
	
	..controls +(-0.8,-0.005) and +(0.8,-0.1).. (3.61,-5.39)
	
	..controls +(0.03,+0.3) and +(-0.1,-0.01).. (3.8,-5);
	
	--------------
	--------------
	
	\node at (5,-2.7) {$h(\D)$};
	
	\draw[shift={(-2.23,-1.05)}] node[above right] at (4.2, -3) {{\small $0$}} node[above left] at (1.2, -3) {{\small $-\tilde d^2$}} node[above right] at (5, -3) {{\small $c^2$}} node[above left] at (3.7, -3) {{\small $-\tilde c^2$}} node[above right] at (7.61, -3) {{\small $d^2$}} node[above right] at (6.7, -3) {{\small $b^2$}} node[below right] at (5.68, -2.88) {{\small $a^2$}} node[below left] at (3, -2.9) {{\small $-\tilde a^2$}} node[above left] at (2.05, -3.005) {{\small $-\tilde b^2$}};
	
	\draw[->, dashed] (2,-7) -- (2,-2.35);
	
	
	\draw[->, dashed] (-1.5,-4) -- (6,-4);

	\draw[shift={(-2.3,-1)}] (1.2,-3) ..controls +(0,0.8) and +(-0.5,0.1).. (4.5,-2.1)
	
	 ..controls +(0.5,-0.1) and +(0,0.2).. (5.1,-3)
	 
	 ..controls +(-0,-0.4) and +(0.2,0).. (4.3,-3.5)
	 
	 ..controls +(-0.2,0) and +(0,-0.4).. (3.7,-3)
	 
	 ..controls +(0,0.2) and +(-0.7,-0.2).. (4.5,-2.1)
	 
	 ..controls +(0.7,0.2) and +(0,0.8).. (7.8,-3)
	 
	 ..controls +(0.1,-1.8) and +(3.5,0).. (3.8,-5.8)
	 
	 ..controls +(-0.25,0) and +(-0.25,0).. (3.8,-5)
	 
	 ..controls +(3.5,0) and +(0.2,-0.8).. (6.9,-3)
	 
	 ..controls +(-0.2,0.5) and +(0.25,0.5).. (5.9,-3)
	 
	 ..controls +(-0.35,-0.7) and +(0.35,0).. (4.3,-3.8)
	 
	 ..controls +(-0.35,0) and +(0.35,-0.7).. (2.9,-3)
	 
	 ..controls +(-0.25,0.5) and +(0,0.5).. (1.9,-3)
	 
	 ..controls +(0,-1) and +(-2,0).. (4.8,-4.8)
	 
	 ..controls +(0.2,0) and +(0.2,0).. (4.8,-5.5)
	 
	 ..controls +(-2.2,0) and +(-0.1,-1.5).. (1.2,-3);
	
	\draw [fill=gray!40, shift={(0.65,0.6)}] (-1.5,-8.5) rectangle ++(0.7,0.5) node[right] at (-0.8,-8.21) {{\small $: \Omega _h^{(2)}$}};
	\draw [fill=gray!15, shift={(0.65,0.6)}] (1,-8.5) rectangle ++(0.7,0.5) node[right] at (1.7,-8.21) {{\small $: \Omega _h^{(1)}$}};
	\draw [shift={(0.65,0.6)}] (3.5,-8.5) rectangle ++(0.7,0.5) node[right] at (4.2,-8.21) {{\small $: \Omega _h^{(0)}$}};

	\end{tikzpicture}
	\caption{Example of $h$ satisfying (DVC')} \label{fig:fig1}
\end{figure}

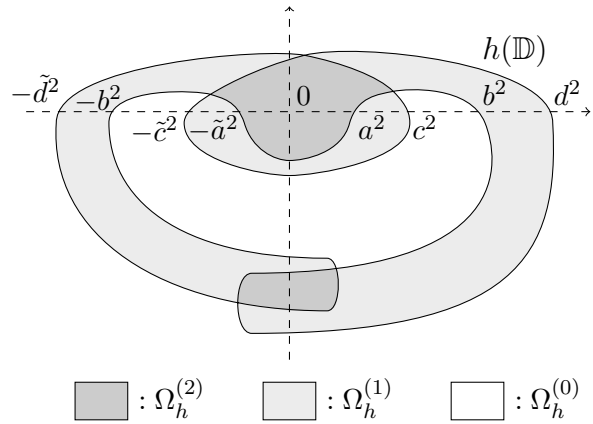
\begin{figure}
	\centering
	\begin{tikzpicture}
 
 -------------------------------
 -------------------------------
 
 --------------
 --------------
 
 \fill[gray!15, shift={(-2.3,-0.65)}] (1.2,-3) (1.2,-3) ..controls +(0,0.8) and +(-0.5,0.1).. (4.5,-2.1)
 
 ..controls +(0.5,-0.1) and +(-0.2,0.2).. (5.68,-2.58)
 
 ..controls +(0.2,-0.2) and +(0,0.1).. (5.9,-3)
 
 ..controls +(0,-0.5) and +(0.35,0).. (4.3,-3.7)
 
 ..controls +(-0.35,0) and +(0,-0.5).. (2.9,-3)
 
 ..controls +(0,0.17) and +(-0.08,-0.05).. (3.25,-2.62)
 
 ..controls +(0.08,0.05) and +(-0.7,-0.18).. (4.5,-2.1)
 
 ..controls +(0.7,0.18) and +(0,0.8).. (7.8,-3)
 
 ..controls +(0.1,-1.8) and +(3.5,0).. (3.8,-5.8)
 
 ..controls +(-0.25,0) and +(-0.25,0).. (3.8,-5)
 
 ..controls +(3.5,0) and +(0.2,-0.5).. (6.9,-3)
 
 
 ..controls +(-0.2,0.5) and +(0.2,0.04).. (5.68,-2.58)
 
 ..controls +(-0.2,-0.04) and +(0.1,0.35).. (5.1,-3)
 
 ..controls +(-0.1,-0.4) and +(0.2,0).. (4.3,-3.5)
 
 ..controls +(-0.2,0) and +(0.1,-0.35).. (3.7,-3)
 
 ..controls +(-0.1,0.3) and +(0.15,-0.05).. (3.25,-2.62)
 
 
 ..controls +(-0.15,0.05) and +(0,0.5).. (1.9,-3)
 
 ..controls +(0,-1) and +(-2,0).. (4.8,-4.8)
 
 ..controls +(0.2,0) and +(0.2,0).. (4.8,-5.5)
 
 ..controls +(-2.2,0) and +(-0.1,-1.5).. (1.2,-3) -- cycle;

 \fill[gray!40, shift={(-2.3,-0.65)}] (4.5,-2.1) ..controls +(0.5,-0.1) and +(-0.2,0.2).. (5.68,-2.58)
 
 ..controls +(-0.2,-0.04) and +(0.1,0.35).. (5.1,-3)
 
 ..controls +(-0.1,-0.4) and +(0.2,0).. (4.3,-3.5)
 
 ..controls +(-0.2,0) and +(0.1,-0.35).. (3.7,-3)
 
 ..controls +(-0.1,0.3) and +(0.15,-0.05).. (3.25,-2.62) 
 
 ..controls +(0.08,0.05) and +(-0.7,-0.18).. (4.5,-2.1) --cycle;

 \fill[gray!40, shift={(-2.3,-0.65)}] (3.8,-5) ..controls +(1,0.005) and +(-0.75,-0.05).. (4.925,-4.95)
 
 ..controls +(0.03,-0.1) and +(0.2,0).. (4.8,-5.5)
 
 ..controls +(-0.8,-0.005) and +(0.8,-0.1).. (3.61,-5.39)
 
 ..controls +(0.03,+0.3) and +(-0.1,-0.01).. (3.8,-5);
 
 --------------
 --------------
 
 \node at (5,-2.7) {$h(\D)$};
 
 \draw[shift={(-2.23,-0.65)}] node[above right] at (4.18, -2.9) {{\small $0$}} node[above left] at (1.33, -2.9) {{\small $-\tilde d^2$}} node[below right] at (5, -2.75) {{\small $a^2$}} node[below left] at (3.7, -2.75) {{\small $-\tilde a^2$}} node[above right] at (7.6, -2.9) {{\small $d^2$}} node[above right] at (6.65, -2.9) {{\small $b^2$}} node[below right] at (5.72, -2.75) {{\small $c^2$}} node[below left] at (2.9, -2.77) {{\small $-\tilde c^2$}} node[above left] at (2.15, -3) {{\small $-b^2$}};
 
 \draw[->, dashed] (2,-6.8) -- (2,-2.1);
 
 
 \draw[->, dashed] (-1.5,-3.5) -- (6,-3.5);

 \draw[shift={(-2.3,-0.65)}] (1.2,-3) ..controls +(0,0.8) and +(-0.5,0.1).. (4.5,-2.1)
 
 ..controls +(0.5,-0.1) and +(-0.2,0.2).. (5.68,-2.58)
 
 ..controls +(0.2,-0.2) and +(0,0.1).. (5.9,-3)
 
 ..controls +(0,-0.5) and +(0.35,0).. (4.3,-3.7)
 
 ..controls +(-0.35,0) and +(0,-0.5).. (2.9,-3)
 
 ..controls +(0,0.17) and +(-0.08,-0.05).. (3.25,-2.62)
 
 ..controls +(0.08,0.05) and +(-0.7,-0.18).. (4.5,-2.1)
 
 ..controls +(0.7,0.18) and +(0,0.8).. (7.8,-3)
 
 ..controls +(0.1,-1.8) and +(3.5,0).. (3.8,-5.8)
 
 ..controls +(-0.25,0) and +(-0.25,0).. (3.8,-5)
 
 ..controls +(3.5,0) and +(0.2,-0.5).. (6.9,-3)
 
 
 ..controls +(-0.2,0.5) and +(0.2,0.04).. (5.68,-2.58)
 
 ..controls +(-0.2,-0.04) and +(0.1,0.35).. (5.1,-3)
 
 ..controls +(-0.1,-0.4) and +(0.2,0).. (4.3,-3.5)
 
 ..controls +(-0.2,0) and +(0.1,-0.35).. (3.7,-3)
 
 ..controls +(-0.1,0.3) and +(0.15,-0.05).. (3.25,-2.62)
 
 
 ..controls +(-0.15,0.05) and +(0,0.5).. (1.9,-3)
 
 ..controls +(0,-1) and +(-2,0).. (4.8,-4.8)
 
 ..controls +(0.2,0) and +(0.2,0).. (4.8,-5.5)
 
 ..controls +(-2.2,0) and +(-0.1,-1.5).. (1.2,-3);
 
 \draw [fill=gray!40, shift={(0.65,0.9)}] (-1.5,-8.5) rectangle ++(0.7,0.5) node[right] at (-0.8,-8.21) {{\small $: \Omega _h^{(2)}$}};
 \draw [fill=gray!15, shift={(0.65,0.9)}] (1,-8.5) rectangle ++(0.7,0.5) node[right] at (1.7,-8.21) {{\small $: \Omega _h^{(1)}$}};
 \draw [shift={(0.65,0.9)}] (3.5,-8.5) rectangle ++(0.7,0.5) node[right] at (4.2,-8.21) {{\small $: \Omega _h^{(0)}$}};
 
 	\end{tikzpicture}
 \caption{Example of $h$ which does not satisfy (DVC')} \label{fig:fig2}
\end{figure}


\section{Necessary conditions}
\label{nec}

In this section we prove Proposition \ref{ness}. The proof goes along the same 
lines as the proofs in \cite{barl} and we include it for the sake of completeness only. 
Assume that $\Phi$ is not $N$-valent and so, for some
$\mu$, the equation  $\Phi(z) = \mu$ has $N+1$ solutions $z_1, z_2, \dots z_{N+1}$ in $\D$ counting 
multiplicities. We assume that these solutions are distinct. If they are not distinct the argument requires an obvious
modification (see \cite{barl} for details). We will show that the adjoint operator 
$T_{\bar \Phi}$ has an eigenvector and so $T_\Phi$ is not hypercyclic, a contradiction. 

We construct this eigenvector of  $T_{\bar \Phi}$ as a linear combination of Cauchy kernels 
$k_{z_m}$, where $k_\lambda(z) = \frac{1}{1-\bar \lambda z}$. Recall that for any 
antianalytic Toeplitz operator we have $T_{\overline{\phi}} k_\lambda = 
\overline{\phi(\lambda)} k_\lambda$. 

Put $f = \sum_{m=1}^{N+1}\beta_m k_{z_m}$,
where $\beta_m$ are some complex coefficients. Note that
$T_{\overline{\Phi}} = T_{R^*(z) +\overline{\phi(z)}}$, where
$$
R^*(z) = \sum_{k=0}^{N_1} \bar c_k z^k + \sum _{l=1}^r 
\sum_{j=1}^{k_l}\frac{\bar \alpha_{l, j}}{(z-\bar \eta_l)^{j}}
$$
is analytic and bounded in $\D$. Hence, using the fact that $R(1/z_m) + \phi(z_m) = \mu$, we get
\begin{equation}
\label{bab2}
\begin{aligned}
T_{\overline{\Phi}} f(z) = \sum_{m=1}^{N+1} \beta_m\bigg(
\frac{R^*(z)}{1-\overline{z_m} z} & +\frac{\overline{\phi(z_m)}}{1-\overline{z_m} z}\bigg) \\
& = \bar \mu f(z) +\sum_{m=1}^{N+1}\beta_m 
\frac{R^*(z) - \overline{R(1/z_m)}}{1-\overline{z_m} z}.
\end{aligned}
\end{equation}
Clearly, 
$$
\frac{R^*(z) - \overline{R(1/z_m)}}{1-\overline{z_m} z}
\in {\rm span} \bigg\{z^k, \frac{1}{(z-\bar \eta_l)^{j}}: \ 0\le k\le N_1-1, \ 1\le l\le r, 
\ 1\le j\le k_l \bigg\}. 
$$
Since the dimension of this span is $N$, we can find nontrivial $\beta_m$ such that the last
sum in \eqref{bab2} is identically zero, and so $T_{\overline{\Phi}} f = \bar \mu f$. 
\qed
\smallskip

\section{Sufficient conditions}\label{suff}

The proofs of Theorems \ref{main2}--\ref{main4} are each based on 
Proposition \ref{red}, which in turns is based on an application of the Godefroy--Shapiro 
Criterion (see \cite{gosh}, \cite[Corollary 1.10]{bm} or \cite[Theorem 3.1]{gp}).

\begin{theorem*}[Godefroy--Shapiro Criterion] 
If, for a bounded linear operator $T$ on a separable Banach 
space $X$, both $\cup_{|\lambda|<1}{\rm Ker}\,(T-\lambda I)$ 
and $\cup_{|\lambda|>1}{\rm Ker}\,(T-\lambda I)$  span a dense subspace in $X$, then $T$ is hypercyclic.
\end{theorem*}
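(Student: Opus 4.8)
The plan is to derive hypercyclicity from topological transitivity via the Birkhoff transitivity theorem, which is available because $X$ is a separable Banach space (hence a second countable Baire space). Thus it suffices to prove that for every pair of nonempty open sets $U,V\subset X$ there exists $n\ge 0$ with $T^n(U)\cap V\neq\emptyset$. Write $Y={\rm span}\,\bigcup_{|\lambda|<1}{\rm Ker}\,(T-\lambda I)$ and $Z={\rm span}\,\bigcup_{|\lambda|>1}{\rm Ker}\,(T-\lambda I)$; by hypothesis both are dense in $X$.

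Two elementary observations drive the argument. First, if $y\in Y$ is written as a finite sum $y=\sum_j\alpha_j x_j$ of eigenvectors $Tx_j=\lambda_jx_j$ with $|\lambda_j|<1$, then $T^ny=\sum_j\alpha_j\lambda_j^n x_j\to 0$. Second, if $b\in Z$ is written as $b=\sum_k\beta_k u_k$ with $Tu_k=\mu_k u_k$ and $|\mu_k|>1$, then the vectors $w_n:=\sum_k\beta_k\mu_k^{-n}u_k$ satisfy $T^nw_n=b$ for every $n$, while $w_n\to 0$ as $n\to\infty$ since each $|\mu_k^{-n}|\to 0$. In effect $Z$ carries an approximate right inverse that contracts to zero under iteration, the contraction being exactly what the hypothesis $|\mu_k|>1$ provides.

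To prove transitivity, fix nonempty open $U,V$ and choose $\varepsilon>0$, $u_0\in U$, $v_0\in V$ with $B(u_0,\varepsilon)\subset U$ and $B(v_0,\varepsilon)\subset V$. By density select $a\in Y$ with $\|a-u_0\|<\varepsilon/2$ and $b\in Z$ with $\|b-v_0\|<\varepsilon/2$, and set $x_n=a+w_n$ with $w_n$ as above. Since $w_n\to 0$ we have $x_n\to a$, so $x_n\in U$ for all large $n$; and since $T^nx_n=T^na+T^nw_n=T^na+b$ with $T^na\to 0$, we have $T^nx_n\to b$, so $T^nx_n\in V$ for all large $n$. Therefore $T^n(U)\cap V\neq\emptyset$ for every sufficiently large $n$, which gives transitivity (in fact topological mixing), and the conclusion follows from Birkhoff's theorem.

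I expect no essential obstacle here; the argument is exactly a verification of the Hypercyclicity Criterion with the dense sets $Y$ and $Z$. The only point needing care is the construction of $w_n$: one must check simultaneously that $T^nw_n=b$ holds exactly and that $w_n\to 0$, and it is precisely at the second step that $|\mu_k|>1$ is used. It is worth noting that $b$ only has to approximate $v_0$ rather than be reached exactly, so the finiteness of the spans and the continuity of $T$ are all that is required, and no completeness of $Y$ or $Z$ is invoked.
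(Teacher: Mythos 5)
Your proof is correct: the construction $w_n=\sum_k\beta_k\mu_k^{-n}u_k$ with $T^nw_n=b$ and $w_n\to 0$, combined with $T^n a\to 0$ on the span of the small eigenvalues, is exactly the standard direct transitivity (indeed mixing) argument, and Birkhoff's theorem applies since a separable Banach space is a second countable Baire space. The paper itself does not prove this criterion but only cites \cite{gosh}, \cite[Corollary 1.10]{bm} and \cite[Theorem 3.1]{gp}, and your argument is essentially the proof found in those references.
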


We will then first search for the eigenvectors of $T_{\Phi}$.

\subsection{Computation of eigenfunctions}
Let us denote by $S_n(g)$ the $n$th partial sum of the Taylor series of $g\in H^2$ at $0$. 
Put $Q(z) = \prod_{l=1}^{r}(1-\eta_l z)^{k_l}$.

We recall that 
$T_{\bar{z}}$ is the backward shift $S^*$ on $H^2$, i.e.,
\[
T_{\bar{z}}f=S^*f=\frac{f(z)-f(0)}{z}\quad \text{and}\quad T_{\bar{z}^k}f=(S^*)^kf=\frac{f(z)-S_{k-1}(f)}{z^k}.
\]
Then, for $\eta \in \Dh$, $k\geq 1$ and $g\in H^2$,
$$
\begin{aligned}
(S^*-\eta I)^kg & = \sum_{j=0}^k\binom{k}{j}(-\eta)^{k-j}(S^*)^jg \\
& =  g\sum_{j=0}^k\frac{\binom{k}{j}(-\eta)^{k-j}}{z^j}-\sum_{j=1}^k\binom{k}{j}(-\eta)^{k-j}\frac{S_{j-1}(g)}{z^j}\\
& =  \frac{(1-\eta z)^k}{z^k}g-\frac{1}{z^k}\sum_{j=1}^k\binom{k}{j}(-\eta z)^{k-j}S_{j-1}(g).
\end{aligned}
$$
Thus, for $f\in H^2$, applying the previous to $g=[(S^*-\eta I)^k]^{-1}f$, we obtain
\[
[(S^*-\eta I)^k]^{-1}f=\frac{z^kf+p_k}{(1-\eta z)^k},
\]
where $p_k$ is some polynomial of degree at most $k-1$. It follows that
$$
\begin{aligned}
T_{\Phi}f & = \sum_{k=0}^{N_1}c_k (S^*)^k f+\sum _{l=1}^r \sum_{j=1}^{k_l}
\alpha_{l, j}  [(S^*-\eta_l)^{j}]^{-1} f +\varphi f \\
& =  \sum_{k=0}^{N_1} \frac{c_kf +q_k}{z^k}+\sum _{l=1}^r \sum_{j=1}^{k_l}
\alpha_{l, j} 
\frac{z^{j}f+p_{l, j}}{(1-\eta_l z)^{j}}+\varphi f \\
& = \Phi f +\sum_{k=1}^{N_1} \frac{q_k}{z^k}+\sum _{l=1}^r \sum_{j=1}^{k_l}
\frac{ \alpha_{l, j} p_{l, j}}{(1-\eta_l z)^{k_l}},
\end{aligned}
$$
where  $p_{l, j}$ is a polynomial of degree at most $k_l-1$ 
and $q_k$ is a polynomial of degree at most $k-1$ (while $q_0=0$).

Therefore, the equation $T_{\Phi}f_\lambda = \lambda f_\lambda$ is equivalent, 
for some polynomials $p$ and $q$ with degree at most $N_2-1$ and $N_1 -1$ respectively, to
\[
f_\lambda =\frac{z^{N_1} p + Q q}{z^{N_1} Q (\Phi-\lambda)}.
\]
Clearly, $f_\lambda \in H^2$ for any $\lambda \in \C\setminus \overline{\Phi(\D)}$, and so $f_\lambda$
is an eigenvector of $T_{\Phi}$. 
Note that this holds true {\it for any choice} of the polynomials $p$ and $q$ with degree at most $N_2-1$ 
and $N_1-1$ respectively.
\medskip

\subsection{Proof of Proposition \ref{red}}
By \eqref{spe}, we can find two open sets $U_0\subset \D\cap (\C\setminus \overline{\Phi(\D)})$ 
and $U_1\subset \widehat{\D} \cap (\C\setminus \overline{\Phi(\D)})$ 
consisting of eigenvalues of $T_{\Phi}$. Let $\lambda_0\in U_0$. 

Expanding $f_{\lambda}$ around $\lambda_0$, for $\lambda$ close to $\lambda_0$, we get
\[
f_{\lambda}=\frac{z^{N_1} p + Qq}{Q z^{N_1} (\Phi - \lambda_0)} \sum _{k\geq 0} h^k(\lambda-\lambda_0)^k,
\]
with $h=\frac{1}{\Phi-\lambda_0}$. Thus, if $f \in H^2$ is orthogonal to all such $f_{\lambda}$, then
\[
f\perp \frac{z^{N_1} p + Qq}{Q z^{N_1} (\Phi - \lambda_0)} h^k,\qquad k\geq 0
\]
for any polynomials $p$ and $q$ with degree at most $N_2-1$ and $N_1-1$ respectively.
Note that any polynomial of degree at most $N-1$ can be represented as $z^{N_1} p + Qq$
for some $p$ and $q$, whence
\[
f\perp (z^{N_1} Q (\Phi - \lambda_0))^{-1} z^j h^k,\qquad 0\le j\le N-1, \ \ k\geq 0.
\]
Now, by assumption, the family $\{1,z,\ldots,z^{N-1}\}$ is a cyclic set for the operator 
$T_h$, i.e., the family $\{z^j h^k:\,0\le j\le N-1, k\geq 0\}$ is complete in $H^2$. 
Since $z^{N_1} Q(\Phi - \lambda_0)$ is a nonvanishing function in $A(\D)$, 
it is invertible in $A(\D)$, and so the family 
\[\{ (z^{N_1} Q(\Phi - \lambda_0))^{-1} z^j h^k:\,0\le j\le N-1, k\geq 0\}\]
is also complete. Thus, $f=0$. 

Since the same argument works for $U_1$, we see that the families 
$\{f_\lambda\}_{\lambda\in U_0}$ and $\{f_\lambda\}_{\lambda\in U_1}$ are complete in $H^2$. Hence, 
by the Godefroy--Shapiro Criterion, $T_\Phi$ is hypercyclic. \qed
\medskip
\begin{remark}
\label{yuu}
{\rm Note that for the conclusion of Proposition \ref{red} to hold
it is sufficient to assume that there exist
$\lambda_0 \in \D\cap (\C\setminus \overline{\Phi(\D)})$ 
and $\lambda_1 \in \widehat{\D} \cap (\C\setminus \overline{\Phi(\D)})$
such that for $h_0=\frac{1}{\Phi-\lambda_0}$  and
$h_1=\frac{1}{\Phi-\lambda_1}$ the family 
$\{1,z,\ldots,z^{N-1}\}$ is cyclic for $T_{h_0}$ and for $T_{h_1}$. }
\end{remark}
\medskip

Before giving the proofs of Theorems \ref{main2}--\ref{main4}, let us recall Solomyak's 
results that we shall apply.


\subsection{Solomyak's theorems}
Even if we use only a very special case of the results from \cite{sol}, we find it appropriate to 
give a short survey of them. Recall that a matrix-valued function of size $n\times p$ whose entries 
are in $H^2$ is said to be {\it outer} if $p\ge n$ and the greatest common inner divisor of its minors of order 
$n$ is 1. In this definition it is possible that $p=\infty$.

\begin{theorem}[B. Solomyak, 1987]
\label{so1}
Let $h \in A(\D)$ and let the set $h(\T)$ be a finite union of $\mathcal{C}^2$-smooth Jordan arcs. 
If $h$ satisfies the property {\rm (IAC)}, then the set 
$\{u_1,\ldots,u_m \} \subset H^2$ is cyclic for the operator $T_h$ if and only 
if the following two conditions hold:
\medskip

{\bf (P1)} For any $\zeta_1, \zeta_2, \ldots, \zeta_l \in \D$ such that 
$h(\zeta_1) = \ldots = h(\zeta_l) = a$, and the value $a$ is taken at the point $\zeta_j$ 
with multiplicity $K_j$, we have
$$
rank\ [u_i(\zeta_j), u_i'(\zeta_j), 
\ldots u_i^{(K_j - 1)} (\zeta_j)]_{\substack{1\leq i\leq m\\ 1\leq j \leq l}} 
= \sum\limits_{j=1}^l K_j;
$$

{\bf (P2)} For any $z \in \T$ and for sufficiently small neighbourhood $V_z$ such that in 
$V_z \cap \D$ there exist $k$ one-to-one branches $\psi_1 =id$, $\psi_2, \ldots, \psi_k$ 
of the function $h^{-1} \circ h$, the matrix-function 
$[(u_i \circ \psi_j) (\zeta)]_{1\le i \le m, \ 1\le j \le k}$ is outer in $V_z \cap \D$.
\end{theorem}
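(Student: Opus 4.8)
The plan is to prove both implications through the annihilator reformulation of completeness, transferring the whole problem to the image domain $h(\D)$, where the \textbf{(IAC)} hypothesis supplies a clean branched-covering picture. First I would note that the family $\{h^k u_i:\,1\le i\le m,\ k\ge 0\}$ is complete in $H^2$ if and only if the only $f\in H^2$ with $\int_\T h^k u_i\bar f\,dm=0$ for all $k\ge 0$ and all $i$ is $f=0$. Fixing such an $f$ and setting $g_i=u_i\bar f\in L^1(\T)$, the moment conditions $\int_\T h^k g_i\,dm=0$ are encoded by the Cauchy transforms
\[
G_i(w)=\int_\T\frac{g_i(z)}{h(z)-w}\,dm(z),\qquad w\notin h(\T),
\]
since $(h-w)^{-1}=-\sum_{k\ge 0}h^k w^{-k-1}$ shows that the Laurent coefficients of $G_i$ at $w=\infty$ are exactly these moments. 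Hence the annihilation conditions are equivalent to $G_i\equiv 0$ on the unbounded component of $\C\setminus h(\T)$, and $G_i$ is then determined on the remaining components by its jumps across the arcs of $h(\T)$.

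Next I would exploit \textbf{(IAC)}. Since a branch of $t\mapsto\arg h(e^{it})$ is strictly increasing, the curve $h(\T)$ is traversed monotonically and its winding is controlled; using the adjacency structure recalled before Theorem \ref{main4}, the number of preimages in $\D$ jumps by exactly $1$ across each smooth arc. Over each component of $\C\setminus h(\T)$ one thus has well-defined local inverse branches $\psi_1,\dots,\psi_k$ of $h^{-1}\circ h$, and the Plemelj jump relations for $G_i$ express the jump across an arc as a weighted boundary sum $\sum_j g_i(\psi_j)$. Requiring that every $G_i$ vanish then converts into a system of pointwise linear relations, holding for all $i$ simultaneously, among the quantities $(u_i\bar f)\circ\psi_j$.

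Finally I would read off the two conditions from the two types of exceptional points. At an interior point where preimages $\zeta_1,\dots,\zeta_l$ of a value $a$ coalesce with multiplicities $K_j$, the branches $\psi_j$ ramify and the local relations involve the derivatives of the $u_i$ up to order $K_j-1$; forcing $f=0$ locally is precisely the jet full-rank condition \textbf{(P1)}. Near a boundary point $z\in\T$, after passing to the branches $\psi_1=\mathrm{id},\psi_2,\dots,\psi_k$, the surviving relations among $u_i\circ\psi_j$ are governed by matrix-function theory: a nonzero local annihilator exists exactly when the order-$n$ minors of $[u_i\circ\psi_j]$ share a nontrivial common inner divisor, i.e. when this matrix fails to be outer, which is \textbf{(P2)}. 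For necessity I would, whenever (P1) or (P2) fails, build an explicit nonzero annihilating $f$ from the localized jet relation, respectively from the common inner divisor.

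The main obstacle I expect is the sufficiency direction: showing that (P1) together with (P2) force the global annihilator to vanish, i.e. that the merely \emph{local} relations cannot be assembled into a nonzero $f\in H^2$. This is precisely where the global topology of the cover furnished by \textbf{(IAC)} is indispensable—the clean one-sheet increments across each arc are what allow a local-to-global argument (realizing $H^2$ as a module generated by the jet and boundary data) to run—and where the theory of outer matrix functions must be combined carefully with Hardy-space boundary behaviour to rule out hidden inner obstructions.
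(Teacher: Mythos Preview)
The paper does not prove this theorem. Theorem~\ref{so1} is stated as a quotation of Solomyak's 1987 result \cite{sol} and is used as a black box in the proof of Theorem~\ref{main3}; no argument for it is given or sketched anywhere in the paper. Hence there is no ``paper's own proof'' to compare your proposal against.

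As for the proposal itself: your outline is in the right spirit---encoding the annihilator conditions through the Cauchy transform $G_i(w)=\int_\T g_i/(h-w)\,dm$, invoking Plemelj jump relations across the arcs of $h(\T)$, and reading (P1)/(P2) off the resulting pointwise linear system---and this is broadly the route taken in \cite{sol}. However, what you have written is a plan rather than a proof, and you correctly identify the main gap yourself: the sufficiency direction requires a genuine local-to-global argument showing that the local outer/rank conditions assemble to force $f=0$ in $H^2$. In Solomyak's paper this step is nontrivial and relies on a careful analysis of the model space structure over the branched cover together with matrix inner--outer factorization; your sketch does not supply the mechanism (e.g., how exactly (IAC) rules out holes so that vanishing on the unbounded component propagates inward, or how the outer condition on the local matrix $[u_i\circ\psi_j]$ is leveraged to kill a putative nonzero $f$). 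The necessity direction is also only gestured at: from a failure of (P1) one must produce an $f\in H^2$, not merely a formal jet relation, and from a failure of (P2) one must extract an actual $H^2$ annihilator from the common inner divisor, which again requires work. If you intend to include a proof, you should consult \cite{sol} directly; otherwise, as in the present paper, simply cite the result.
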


Note that this theorem applies also to the case when $\{u_i\}_i$ is an infinite sequence, i.e., $m=\infty$. 
                 
\begin{theorem}[B. Solomyak, 1987]
\label{so2}
Let $h$ be analytic in $\overline{\D}$. If $h$ is of general position and satisfies the property {\rm (DVC')}, 
then the set $\{u_1,\ldots,u_m \}$, $m \in \mathbb{N}$, is cyclic for the operator $T_h$ 
if and only if the conditions {\rm (P1), (P2)} and {\rm (P3)} hold, where
\medskip

{\bf (P3)} For any hole $G \subset \Omega_h^{(0)}$ there exist $i, j \leq m$ such that 
$(u_i \circ \nu^{-1}) / (u_j \circ \nu^{-1}) |_{\partial G} \notin N(G)$, 
where $\nu = h|_{\T}$ and $N(G)$ is the Nevanlinna class in $G$.
\end{theorem}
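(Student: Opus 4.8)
The final statement is Theorem \ref{so2} (Solomyak, 1987), the characterization of cyclic families for $T_h$ when $h$ is of general position and satisfies (DVC'). The plan is to prove the cyclicity criterion itself, establishing that cyclicity of $\{u_1,\dots,u_m\}$ for $T_h$ is equivalent to the conjunction of (P1), (P2), and (P3).

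\textbf{Reduction to a completeness problem in a model space.} First I would recast cyclicity of $\{u_1,\dots,u_m\}$ for $T_h$ as a statement about the invariant subspace $M$ generated by $\{h^k u_i : k\ge 0,\ 1\le i\le m\}$, namely $M = H^2$. The key structural fact, coming from the theory of analytic Toeplitz operators with symbol of general position, is that $T_h$ is unitarily equivalent (after restricting attention to the relevant components) to a \emph{vector-valued} multiplication by the independent variable on a space of the form $L^2(\nu)$-sections over the Riemann surface associated to $h$, where $\nu = h|_{\T}$. Concretely, because $h$ is of general position, the curve $h(\T)$ decomposes $\C$ into the components $\Omega_h^{(k)}$, and on each component the number of preimages in $\D$ is exactly $k$. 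Pulling back through $h$, one identifies $H^2$ with a direct integral / fiber bundle whose fiber over $w\in\Omega_h^{(k)}$ has dimension $k$, spanned by the local inverse branches $\psi_1,\dots,\psi_k$ of $h$. Cyclicity then becomes the statement that the sections $w\mapsto (u_i\circ\psi_j(w))_{i,j}$ generate the whole module.

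\textbf{Necessity of (P1), (P2), (P3).} I would verify each condition is forced by cyclicity by exhibiting, for each failure, a nonzero $f\in H^2$ orthogonal to all $h^k u_i$. If (P1) fails at points $\zeta_1,\dots,\zeta_l$ over a value $a$, the rank deficiency produces a nonzero linear functional on the jet space at those points annihilating every $u_i$ and every derivative up to the prescribed multiplicities; since $h^k$ takes the common value $a^k$ there together with matching jets, this functional is realized by a combination of reproducing-kernel-type elements (Cauchy kernels and their derivatives at the $\zeta_j$) that is orthogonal to all $h^k u_i$, contradicting completeness. Failure of (P2) is the local version: if the matrix $[(u_i\circ\psi_j)(\zeta)]$ is not outer in $V_z\cap\D$, its minors of order $k$ share a nontrivial common inner divisor, which obstructs local completeness of the pulled-back sections and again yields an orthogonal element. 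Failure of (P3) is the global/hole obstruction: if for some hole $G$ all ratios $(u_i\circ\nu^{-1})/(u_j\circ\nu^{-1})|_{\partial G}$ lie in the Nevanlinna class $N(G)$, then the sections admit a common Nevanlinna extension into $G$, which (since $G\subset\Omega_h^{(0)}$ contributes no preimages) forces a deficiency in the range and produces an annihilating functional supported on $G$.

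\textbf{Sufficiency.} This is the harder direction and the main obstacle. Here I would argue that (P1)+(P2) guarantee that the generated module is locally all of the fiber bundle \emph{over the closure of $h(\D)$}: (P2) gives completeness across the boundary curve $h(\T)$ by the outer condition (an outer matrix function has dense range in the relevant $H^2$-module, a Beurling--Lax type statement), while (P1) handles the interior branch points where the fiber dimension jumps. The role of (DVC') enters crucially to propagate completeness from the unbounded component $\Omega_h^{(0)}$ inward: the chain of adjacent components $G_1,\dots,G_k$ with $G_i\subset\Omega_h^{(i)}$ and $G_1$ adjacent to the unbounded hole lets one bootstrap, component by component, the completeness established on lower-valence regions up to any $G\subset\Omega_h^{(k)}$, using that adjacency means $k_1=k_2\pm 1$ so that exactly one branch is gained or lost across each shared arc. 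Finally (P3) removes the only remaining obstruction, the holes in $\Omega_h^{(0)}$: on each such $G$ the non-Nevanlinna ratio condition prevents the orthogonal complement from containing any nonzero section extendable into $G$, closing the argument that $M=H^2$. The delicate point throughout is controlling the boundary behavior of the pulled-back functions $u_i\circ\psi_j$ near the finitely many self-intersection and branch points of $h(\T)$, where the local model degenerates; I expect the careful local analysis there, combined with the global topological bookkeeping provided by (DVC'), to be where the real work lies.
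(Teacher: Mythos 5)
A preliminary point: the paper contains no proof of this statement. Theorem \ref{so2} is imported verbatim from Solomyak \cite{sol} and used as a black box; the authors only apply it to the family $\{1,z,\ldots,z^{N-1}\}$, for which (P1)--(P3) are verified in \cite[Sect.~1.3, Rem.~3]{sol}. So your proposal cannot be checked against an in-paper argument; it has to be judged on its own, against what Solomyak's original proof must accomplish. In outline it is faithful to that method: the branched-covering picture over the components $\Omega_h^{(k)}$, the role of outer matrix functions for (P2), the stepwise propagation across adjacent components driven by (DVC'), and the Nevanlinna-class obstruction on holes for (P3) are all the right ingredients, and your necessity argument for (P1) via Cauchy kernels and their derivatives is essentially correct and standard.

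As a proof, however, there are genuine gaps, and you partly concede them. The opening reduction --- that $T_h$ is ``unitarily equivalent'' to multiplication on a bundle of $L^2$-sections with fiber dimension $k$ over $\Omega_h^{(k)}$ --- is asserted rather than established, and as stated it is too strong: cyclicity of $\{u_i\}$ is the density of $\overline{{\rm span}}\{h^k u_i\}$ in $H^2$, i.e., a polynomial-approximation statement in the variable $w=h(z)$ transported through the covering, and constructing the correct model (together with the pairing that makes ``an annihilating functional supported on $G$'' meaningful) is itself a substantial part of \cite{sol} and \cite{solv}. More seriously, the sufficiency direction --- the only hard one --- remains a plan: the bootstrapping step, in which completeness is pushed from a component of valence $k-1$ to an adjacent component of valence $k$ across an arc $h(I)$ where exactly one branch appears, is precisely the analytic core of Solomyak's argument (it needs a local uniqueness/boundary-value theorem for the extra branch $u_i\circ\psi_k$ near $h(I)$, plus control at the finitely many transversal self-intersections supplied by general position), and you name this step without supplying it, explicitly deferring ``the real work.'' Likewise, for the necessity of (P3) you must actually produce a nonzero $f\in H^2$ with $f\perp h^k u_i$ from the hypothesis that all ratios lie in $N(G)$ --- e.g., via a Cauchy-type transform over the arcs of $\T$ mapped onto $\partial G$ --- and that construction is absent. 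In short: a reasonable roadmap matching Solomyak's strategy, but not a proof; the decisive lemmas are invoked, not proven.
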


We shall apply Solomyak's theorems only to the set
$\{1,z,\ldots,z^{N-1}\}$ which, by \cite[Sect. 1.3, Rem. 3]{sol}, satisfies
the conditions (P1), (P2) and (P3) for any $N$-valent $h$ satisfying the conditions
of Solomyak's theorems \ref{so1} and  \ref{so2}.
\medskip


\subsection{Proof of Theorems \ref{main2}--\ref{main4}}

\begin{proof}[Proof of Theorem \ref{main2}]   
If $\Phi$ satisfies (MVC), then also for any $\lambda \in \C\setminus \overline{\Phi(\D)}$
the function $h = \frac{1}{\Phi - \lambda}$ has the property that for any
$w\in h(\overline{\D})$ the equation $h(z) = w$ has exactly $N$ solutions in 
$\overline{\D}$. By \cite[Proposition 3.1]{barl} the family
$\{z^j h^k:\,0\le j \le N-1, k\ge 0\}$ is complete in $H^2$. Thus, by Proposition \ref{red},
$T_\Phi$ is hypercyclic. 
\end{proof}
\medskip
\begin{proof}[Proof of Theorem \ref{main3}]   
Since $\Phi$ satisfies (IAC), there is $\lambda \in \C\setminus \overline{\Phi(\D)}$ 
such that Theorem \ref{so1} applies to the function $h = \frac{1}{\Phi - \lambda}$. Thus, the family 
$\{z^j h^k:\,0\le j\le N-1, k\geq 0\}$ is complete in $H^2$. Let us show that for any other
$\mu \in \C\setminus \overline{\Phi(\D)}$ and $h_{\mu} = \frac{1}{\Phi - \mu}$
the family $\{z^j h_{\mu}^k:\, 0\le j\le N-1, k\geq 0\}$ is also complete in $H^2$. 

Note that $h_1 = c_1 + \frac{c_2}{h-c_3}$, where $c_1, c_2, c_3$ are some constants 
(which can be written explicitly in terms of $\lambda$ and $\mu$, see \eqref{hhh}) 
and $c_3 \notin h(\overline{\D})$. 
By condition (IAC), the closed domain $\Omega = h(\overline{\D})$ has no holes, and so
the function $z$ can be approximated by functions in ${\rm span} \{(z-c_3)^{-k}:\, k\ge 0\}$ 
uniformly in $\Omega$. Thus, $h\in \overline{{\rm span}} \{(h-c_3)^{-k}:\, k\ge 0\}$ in $H^2$, 
whence
$$
\begin{aligned}
\overline{{\rm span}}
\{z^j h_1^k:\,  0\le j\le & N-1,  k\geq 0\} \\ 
& = \overline{{\rm span}} \{z^j h^k:\, 0\le j\le N-1, k\geq 0\} = H^2.
\end{aligned}
$$
Now, $T_\Phi$ is hypercyclic by Proposition \ref{red}. 
\end{proof}
\medskip
\begin{proof}[Proof of Theorem \ref{main4}]   
Let $\lambda_0 \in \D\cap (\C\setminus \overline{\Phi(\D)})$ and
$\lambda_1 \in \widehat{\D} \cap (\C\setminus \overline{\Phi(\D)})$. 
By Remark \ref{DVCprime}, both functons $h_0 = \frac{1}{\Phi - \lambda_0}$ and 
$h_1 = \frac{1}{\Phi - \lambda_1}$ belong to $A(\D)$, are of general position 
and satisfy (DVC'). Then, by Theorem \ref{so2}, the families $\{z^j h_0^k: 0\le j\le N-1, k\geq 0\}$
and  $\{z^j h_1^k: 0\le j\le N-1, k\geq 0\}$ are complete in $H^2$. 
Therefore, by Proposition \ref{red} and Remark \ref{yuu}, $T_\Phi$ is hypercyclic. 
\end{proof}


\begin{thebibliography}{00.}

\bibitem{barl}
A. Baranov, A. Lishanskii, Hypercyclic Toeplitz operators, 
{\it Results Math.} {\bf 70} (2016), 3, 337--347.

\bibitem{bm}
F.~Bayart, \'E.~Matheron,
{\it Dynamics of Linear Operators}, 
Cambridge University Press, 2009.

\bibitem {gosh} G. Godefroy, J.H. Shapiro, 
Operators with dense, 
invariant cyclic vector manifolds, {\it J. Funct. Anal. }
{\bf 98} (1991), 229--269.

\bibitem{gp}
K.-G. Grosse-Erdmann, A. Peris Manguillot, 
{\it Linear Chaos}, Universitext, Springer, London, 2011. 

\bibitem{nik} N.~Nikolski, \textit{Operators, Functions, and Systems: an
Easy Reading. Vol. I}, Math. Surveys Monogr., Vol. 92, AMS,
Providence, RI, 2002.

\bibitem{rol}
S. Rolewicz, On orbits of elements, {\it Studia Math.} {\bf 32} (1969), 17--22. 

\bibitem{sh} S. Shkarin, Orbits of coanalytic Toeplitz operators 
and weak hypercyclicity, arXiv:1210.3191.

\bibitem{sol} B.M. Solomyak, 
Cyclic families of functions for analytic Toeplitz operators,
{\it Zap. Nauchn. Sem. LOMI} {\bf 157} (1987), 88--102; translation in {\it J. Soviet Math.} {\bf 44} 
(1989), 6, 809--819.

\bibitem{solv}
B.M. Solomyak, A.L. Volberg, Multiplicity of analytic Toeplitz operators. 
In: Nikolskii N.K. (eds), Toeplitz Operators and Spectral Function Theory,
Operator Theory: Advances and Applications, Vol. 42. Birkh\"auser, Basel, 1989, 87--192.

\end{thebibliography}
\end{document}